\documentclass[11pt,a4paper]{amsart}
\usepackage{amssymb}
\usepackage{mathabx}
\usepackage{mathrsfs}
\usepackage{syntonly}
\usepackage{amsmath}
\usepackage{amsthm}
\usepackage{amsfonts}
\usepackage{amssymb}
\usepackage{latexsym}
\usepackage{amscd,amssymb,amsopn,amsmath,amsthm,graphics,amsfonts,mathrsfs,accents,enumerate,verbatim,calc}
\usepackage[dvips]{graphicx}
\usepackage[colorlinks=true,linkcolor=red,citecolor=blue]{hyperref}
\usepackage[all]{xy}

\date{}
\pagestyle{plain}
\textheight= 23.5 true cm \textwidth =16.5 true cm

\allowdisplaybreaks[4] \footskip=15pt
\renewcommand{\uppercasenonmath}[1]{}

\topmargin=6pt \evensidemargin0pt \oddsidemargin0pt
\numberwithin{equation}{section} \theoremstyle{plain}
\newtheorem{lem}{Lemma}[section]
\newtheorem{cor}[lem]{Corollary}
\newtheorem{prop}[lem]{Proposition}
\newtheorem{thm}[lem]{Theorem}
\newtheorem{hyp}[lem]{Hypothesis}

\newtheorem{nota}[lem]{Notation}

\newtheorem{definition}[lem]{Definition}
\newtheorem{Ex}[lem]{Example}
\newtheorem{Quest}[lem]{Question}
\newtheorem{Property}[lem]{Property}
\newtheorem{Properties}[lem]{Properties}
\newtheorem{Subprops}{}[lem]
\newtheorem{Para}[lem]{}

\newtheorem{rem}[lem]{Remark}

\newenvironment{df}{\begin{definition}\rm}{\end{definition}}
\newenvironment{ex}{\begin{Ex}\rm}{\end{Ex}}

\newtheorem*{ack*}{ACKNOWLEDGEMENTS}




\newcommand{\pf}{\noindent\begin {proof}}
\newcommand{\epf}{\end{proof}}

\pagestyle{myheadings}
\markboth{\rightline {\scriptsize   Peng Xu}}
         {\leftline{\scriptsize Tensor Abelian Geometry of  VI-modules}}

\begin{document}
\begin{center}
{\large  \bf  Tensor Abelian Geometry of  VI-modules}

\vspace{0.5cm}  Peng Xu$\footnote{Corresponding author. Peng Xu is supported by the National Natural Science Foundation of China (Grant No. 12231009, 11971224) }$

\end{center}

\bigskip
\centerline { \bf  Abstract}
\medskip

\leftskip10truemm \rightskip10truemm \noindent\hspace{1em}  In this short note,~we study the spectrum of prime Serre ideals of global representations for noetherian families. In particular, we prove that the spectrum of prime Serre ideals of finitely generated VI-modules is homeomorphic to $\mathbb{N}^{*}$, the one-point compactification of $\mathbb{N}$, which differs from the Balmer spectrum of derived VI-modules. Our method could also be applied to the category of finitely generated FI-modules and the category of global representations for the family of cyclic p-groups. \\[2mm]
{\bf Keywords:} tensor abelian category; prime Serre ideal; global representation;  VI-module\\
{\bf 2020 Mathematics Subject Classification:} { 18G80, 20C99}

\leftskip0truemm \rightskip0truemm
\section { \bf Introduction}

Tensor abelian geometry, introduced by Buan, Krause and Solberg \cite{BKS}, associates to any essentially small tensor abelian category $(\mathcal{A},\otimes, \mathbf{1})$ a topological space $\mathrm{Spc}(\mathcal{A})$,  whose points are the prime Serre ideals in $\mathcal{A}$, endowed with the Zariski topology. This is an abelian analogue of tensor triangular geometry \cite{Bal05,BG22, BG25, Gal18, Gal19}. Our main goal in this short note is to study the spectrum of the abelian category of finitely generated VI-modules, in the setting of global representations for a family of finite groups.

Fix a field $k$ of characteristic zero. Let $\mathcal{G}$  be the category of finite groups and conjugacy classes of surjective group homomorphisms. For  a replete, full subcategory $\mathcal{U}\subseteq \mathcal{G}$, the category $\mathrm{A}(\mathcal{U})$ of $\mathcal{U}$-global representations is defined as the Grothendieck category of all contravariant functors from $\mathcal{U}$ to $k$-vector spaces. This concept was introduced by Pol and Strickland and is closely related to categories of VI-modules and FI-modules; see \cite{PS22}. In particular, when $\mathcal{U}$ is taken to be the category of elementary abelian p-groups, Pontryagin duality induces an equivalence between the category of VI-modules (over $k$) and $\mathrm{A}(\mathcal{U})$.  We say $\mathcal{U}$ is  noetherian if $\mathrm{A}(\mathcal{U})$ is a locally noetherian Grothendieck category. Important examples include the category of elementary abelian p-groups and the category of cyclic p-groups. In the noetherian case,  the subcategory $\mathrm{A}(\mathcal{U})^{c}$ of finitely generated objects forms a tensor abelian category. Here we study the tensor abelian geometry of $\mathrm{A}(\mathcal{U})^{c}$  using methods adapted from \cite{BBP+25b}.

For an object $X\in \mathrm{A}(\mathcal{U})^{c}$, its support is defined as the subcategory of $\mathcal{U}$ on which $X$ takes nonzero values. The pair $(\pi_{0}\mathcal{U}, \mathrm{supp})$ constitutes a support datum on $\mathrm{A}(\mathcal{U})^{c}$, and consequently gives rise to a distinguished class of   prime Serre ideals in $\mathrm{A}(\mathcal{U})^{c}$, called group primes. Moreover, this support serves as a key invariant for understanding  relations among objects in $\mathrm{A}(\mathcal{U})$. Our main theorem in this direction is stated below. (For the definition of $\mathrm{Serre}_{\otimes}^{+}\langle Y\rangle$, see Definition \ref{df 4.13}.)

\vspace{2mm}
 {\bf Theorem 1.1 }  Let $\mathcal{U}$ be noetherian and $X, Y\in \mathrm{A}(\mathcal{U})^{c}$. If $\mathrm{supp}(X)\subseteq \mathrm{supp}(Y)$, then $X\in \mathrm{Serre}_{\otimes}^{+}\langle Y\rangle$.\\

Motivated by structural  properties of finitely generated VI-modules and FI-modules, we introduce the notion of an $\mathbb{N}$-stable family (see Definition \ref{df 5.1}). When $\mathcal{U}$ is $\mathbb{N}$-stable and noetherian, there exists a new prime Serre ideal $P_{\infty}$ in $\mathrm{A}(\mathcal{U})^{c}$, which turns out to be the unique prime Serre ideal that is not a group prime. Specifically, we prove the following result:

 \vspace{2mm}
  {\bf Theorem 1.2 }  Let $\mathcal{U}$ be  noetherian and $\mathbb{N}$-stable. Then the spectrum of prime Serre ideals in  $\mathrm{A}(\mathcal{U})^{c}$ is homeomorphic to $\mathbb{N}^{*}$, the one-point compactification of $\mathbb{N}$.\\

In particular, the spectrum of $\mathrm{A}(\mathcal{U})^{c}$ is homeomorphic to $\mathbb{N}^{*}$ when $\mathcal{U}$ is the category of elementary abelian p-groups or the category of cyclic p-groups. We mention that this result  provides a nontrivial example of tensor abelian category for which the spectrum of prime Serre ideals can be explicitly computed and it illustrates that a noetherian tensor abelian category may admit a non-noetherian spectrum. Notably,  the spectrum  $\mathrm{Spc}(\mathrm{A}(\mathcal{U})^{c})$ does not depend on whether $\mathcal{U}$ is multiplicative---in sharp contrast to the Balmer spectrum of $\mathrm{D}(\mathcal{U})^{c}$, the subcategory of compact objects in the derived category of $\mathrm{A}(\mathcal{U})$, where multiplicativity plays a crucial role, see \cite[Theorem 6.13, Example 12.10]{BBP+25b}. It should also be noted that our method naturally extends to the category of finitely generated FI-modules.\\

\textbf{Conventions}: Throughout, we  will work over a field $k$ of characteristic 0. We write $G\twoheadrightarrow H$ to denote a surjective group homomorphism. All categories considered in this paper are assumed to be samll. By a subcategory, we always mean one that is replete and full.

\section { \bf Preliminaries}
In this section, we briefly recall some of the notation and constructions which we will use.  For more details, we refer the reader to \cite{PS22,BBP+25a}.

Throughout this paper,  let $\mathcal{G}$ be the category of finite groups and conjugacy classes of surjective group homomorphisms. A replete full subcategory $\mathcal{U}$ of $\mathcal{G}$ is said closed downwards if for any surjective group homomorphism $G\twoheadrightarrow H$, $G\in \mathcal{U}$ implies $H\in \mathcal{U}$. $\mathcal{U}$  is said closed upwards if for any surjective group homomorphism $H\twoheadrightarrow G$, $G\in \mathcal{U}$ implies $H\in \mathcal{U}$. $\mathcal{U}$  is said widely closed  if whenever $G\twoheadleftarrow H\twoheadrightarrow K$ are surjective homomorphisms with $G,H,K\in \mathcal{U}$, the image of the combined morphism $H\rightarrow G\times K$ is also in $\mathcal{U}$. In particular, if $\mathcal{U}$ is closed downwards or closed upwards, then $\mathcal{U}$ is widely closed. $\mathcal{U}$ is said essentially finite if it contains only finite many isomorphism classes of objects. We write $\pi_{0}(\mathcal{U})$ for the set of isomorphism classes of objects of $\mathcal{U}$. We denote by $\mathcal{U}_{> n}$ the full subcategory of $\mathcal{U}$ consisting of groups of order greater than
$n$, and by $\mathcal{U}_{\leq n}$ the full subcategory consisting of groups of order at most $n$.  The upwards closure of a subset $S\subset \pi_{0}(\mathcal{U})$ is $$\uparrow(S):=\{~[G]\in \pi_{0}(\mathcal{U}) |~\exists [H]\in S:~G\twoheadrightarrow H\}.$$

 Given a replete full subcategory $\mathcal{U}\subseteq \mathcal{G}$, we denote by $$\mathrm{A(\mathcal{U}):=Fun(\mathcal{U}^{op};Mod}~k)$$ the abelian category of functors $\mathrm{\mathcal{U}^{op}\rightarrow Mod}~k.$ Let $X\in \mathrm{A(\mathcal{U})}$, the (index) support of $X$ is defined to be $\mathrm{supp}(X):=\{[G]\in\pi_{0}(\mathcal{U})|X(G)\neq 0\}$. When $\{G\}$ is the replete full subcategory spanned by a single finite group $G$, we have $\mathrm{A}(\{G\})\simeq \mathrm{Mod}~k[\mathrm{Out}(G)]$.

Let $\mathcal{U}$ be a subcategory and $G\in \mathcal{U}$. There is an evaluation functor $$ev_{G}:\mathrm{A}(\mathcal{U})\rightarrow \mathrm{A}(\{G\})\quad\quad\quad X\mapsto X(G)$$ which admits a left adjoint $$e_{G,\bullet}:\mathrm{A}(\{G\})\rightarrow \mathrm{A}(\mathcal{U})\quad\quad\quad V\mapsto e_{G,V}\cong V\otimes_{k[\mathrm{Out}(G)]}k[\mathrm{Hom}_{\mathcal{U}}(-,G)].$$ Set $e_{G}:=e_{G,k[\mathrm{Out}(G)]}$, then we have $\mathrm{Hom}_{\mathrm{A}(\mathcal{U})}(e_{G},X)\cong X(G)$. Let V be an $\mathrm{Out}(G)$-representation, the object $\chi_{G,V}$ is defined as following: $\chi_{G,V}(H)=e_{G,V}(H)$ if $G\cong H$ and  $\chi_{G,V}(H)=0$ otherwise. There is a map $e_{G,V}\rightarrow \chi_{G,V}$ induced by the identity map of $V$.

We have the following facts about $\mathrm{A}(\mathcal{U})$ .
\begin{enumerate}
\item $\mathrm{A}(\mathcal{U})$ is a Grothendieck abelian category with generators given by $e_{G}$ for each $G\in \mathcal{U}$.

\item $\mathrm{A}(\mathcal{U})$ is a symmetric monoidal category with pointwise tensor product,i.e.,$(X\otimes Y)(G)=X(G)\otimes Y(G)$ for $X,Y\in \mathrm{A}(\mathcal{U})$ and $G\in \mathcal{U}$. The tensor unit $\mathrm{1}$ is the constant functor with value $k$ and all maps the identity. When $\mathcal{U}$ contains the trivial group, then $\mathrm{1}\cong e_{1}$.

\item Each object in $\mathrm{A}(\mathcal{U})$ is flat.

\end{enumerate}

Given a functor $f:\mathcal{U}\rightarrow \mathcal{V}$, we have adjunctions
\[
\xymatrix@R=0em@C=4em{
\mathsf{A}(\mathcal{U})
\ar@/^3ex/[r]^{f_{!}}        
\ar@/_3ex/[r]_{f_{*}}        
&
\mathsf{A}(\mathcal{V})
\ar[l]^{f^{*}}               
}
\]
where $f^{*}$ is defined by $f^{*}(X)(H)=X(f(H))$, the left adjoint $f_{!}$ of $f^{*}$ is given by the left Kan extension along $f$, and the right adjoint $f_{*}$ of $f^{*}$ is given by the right Kan extension along $f$.

Recall that an object $X\in \mathrm{A}(\mathcal{U})$ is finitely generated if there exists an epimorphism $\oplus^{n}_{i=1}e_{G_{i}}\twoheadrightarrow X.$

The following facts will be used  freely.

Let $i:\mathcal{U}\rightarrow \mathcal{V}$ be an inclusion of replete full subcategories of $\mathcal{G}$. Then:
\begin{enumerate}
\item $i^{*}i_{*}(X)\cong X\cong i^{*}i_{!}(X)$ for $X\in \mathrm{A}(\mathcal{U})$.

\item $i^{*}(e_{G,V})=e_{G,V}$ and $i_{!}(e_{G,V})=e_{G,V}$.

\item $i^{*}$  is strong monoidal, i.e., $i^{*}(\mathrm{1})=\mathrm{1}$ and $i^{*}(X\otimes Y)=i^{*}(X)\otimes i^{*}(Y)$.

\item There are natural maps $i_{!}(\mathrm{1})\rightarrow \mathrm{1}\rightarrow i_{*}(\mathrm{1})$ and $i_{!}(X\otimes Y)\rightarrow i_{!}(X)\otimes i_{!}(Y)$ and $i_{*}(X)\otimes i_{*}(Y)\rightarrow i_{*}(X\otimes Y)$ giving (op)lax monoidal structures.

\item If $\mathcal{U}$  is closed upwards in $\mathcal{V}$, then $i_{!}$ is extension by zero and so preserves  all limits, colimits and tensors (but not the unit).

\item If $\mathcal{U}$  is closed downwards in $\mathcal{V}$, then $i_{*}$ is extension by zero and so preserves all limits, colimits and tensors (but not the unit).

\item If $\mathcal{U}$  is closed downwards in $\mathcal{V}$, then $i^{*}$  preserves finitely generated objects.

\item $i_{!}$  preserves finitely generated objects.
\end{enumerate}
We will always assume that $\mathcal{U}$  is widely closed and the unit $\mathrm{1}$ is finitely generated.

 \section{\bf Tensor abelian geometry}
In this section, we study the spectrum of prime serre ideals of tensor abelian categories. This is an analogue of Balmer's tensor triangular geometry \cite{Bal05}. The lattice-theoretic approach is due to Buan, Krause and  Solberg \cite{BKS}. By a tensor abelian category $\mathcal{A}$, we mean an abelian category equipped with a symmetric monoidal structure for which the tensor product is exact in each variable. A full subcategory $\mathcal{D}$ of $\mathcal{A}$ is called Serre if for any short exact sequence $0\rightarrow A\rightarrow B \rightarrow C\rightarrow 0$ in $\mathcal{A}$, we have $B\in \mathcal{D}$ if and only if $A,C\in \mathcal{D}$. A Serre subcategory $\mathcal{D}$ is called a Serre ideal if $X\otimes Y\in \mathcal{D}$ for any $X\in \mathcal{D}$ and $Y\in \mathcal{A}$.  For a collection of objects $S$ of $\mathcal{A}$, we denote by $\mathrm{Serre}_{\otimes}\langle S\rangle$ the Serre ideal generated by $S$ and $\mathrm{Serre}\langle S\rangle$ the Serre subcategory generated by $S$.

\begin{df} A Serre ideal $\mathcal{D}$ of $\mathcal{A}$ is called radical if $X^{\otimes n}\in \mathcal{D}$ implies $X\in \mathcal{D}$ for any positive integer $n$ and $X\in \mathcal{A}$. A proper Serre ideal $\mathcal{P}$ of $\mathcal{A}$ is called prime if $X\otimes Y\in \mathcal{P}$ implies $X\in \mathcal{P}$ or $Y\in \mathcal{P}$ for $X,Y\in \mathcal{A}$.
\end{df}

\begin{rem}Let $L_{Serre}(\mathcal{A})$ be the set of Serre ideals of $\mathcal{A}$. It is an ideal lattice in the sense of \cite{BKS}.  A Serre ideal $\mathcal{D}$ is radical if and only if it is a semi-prime element in $L_{Serre}(\mathcal{A})$ and a proper Serre ideal $\mathcal{P}$ is prime if and only if it is a prime element in $L_{Serre}(\mathcal{A})$.
\end{rem}

\begin{df}Let $\mathrm{Spc}(\mathcal{A})$ be the set of all prime Serre ideals of $\mathcal{A}$. The Zariski topology on $\mathrm{Spc}(\mathcal{A})$ is defined as following: a subset of $\mathrm{Spc}(\mathcal{A})$ is closed if it is of the form $Z(S):=\{P\in \mathrm{Spc}(\mathcal{A})|~P\cap S=\varnothing \}$ for some family of objects $S\subset \mathcal{A}$.
\end{df}

\begin{rem} The definition of spectrum of prime Serre ideals is equivalent to that one defined in \cite[Section 7]{BKS}. We choose this  because it is convenient to work with support data defined on objects of $\mathcal{A}$ instead of support data defined on ideals. On the other hand, there exists another spectrum $\mathrm{Spec}(\mathcal{A})$ of $\mathcal{A}$, namely the spectrum of prime thick ideals; see \cite[Section 7]{Kra24} and \cite[Theorem 6.8]{B26}.
\end{rem}

\begin{ex}Let $Vec_{K}$ be the category of finite dimensional vector spaces over a field $K$. Then $\mathrm{Spc}(Vec_{K})=\{0\}.$
\end{ex}

\begin{ex}Let $K$ be a field of characteristic $p$ and $C_{p}$ the cyclic group of order $p$. Then the spectrum $\mathrm{Spc}(\mathrm{mod}~KC_{p})$ of prime Serre ideals of the category of finite dimensional $KC_{p}$-modules is $\{0\}$. Indeed, there exists two prime thick ideals of $\mathrm{mod}~KC_{p}$ by \cite[Example 17]{Kra24},  $\{0\}$ and  the subcategory of finite dimensional projective $KC_{p}$-modules. Since the latter is not closed under subobjects, the only prime Serre ideal is $\{0\}$.
\end{ex}

Let $\mathcal{D}$ be a Serre ideal of the tensor abelian category $\mathcal{A}$. Then the quotient category $\mathcal{A}/\mathcal{D}$ has the same objects as $\mathcal{A}$ and that its morphisms are obtained via calculus of fractions by inverting those morphisms having their kernel and cokernel in $\mathcal{D}$.  The quotient category $\mathcal{A}/\mathcal{D}$ inherits a symmetric monoidal structure since $\mathcal{D}$ is a Serre ideal. The following result shows that the spectrum of prime Serre ideals behaves well under such a localization. Compare with \cite[Proposition 3.11]{Bal05} and \cite[Lemma 6.10]{B26}.
\begin{prop}\label{prop3.7}Let $\mathcal{D}$ be a Serre ideal of the tensor abelian category $\mathcal{A}$ and $\pi: \mathcal{A}\rightarrow \mathcal{A}/\mathcal{D}$  the localization. Then there is a homeomorphism between $\mathrm{Spc}(\mathcal{A}/\mathcal{D})$ and the subspace $\{P\in \mathrm{Spc}(\mathcal{A})| \mathcal{D}\subset P\}$ of $\mathrm{Spc}(\mathcal{A})$.
\end{prop}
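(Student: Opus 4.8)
The plan is to realize the homeomorphism as the pullback $\pi^{-1}\colon\mathrm{Spc}(\mathcal{A}/\mathcal{D})\to\mathrm{Spc}(\mathcal{A})$, sending $Q$ to $\pi^{-1}(Q)=\{X\in\mathcal{A}\mid\pi(X)\in Q\}$, with inverse the essential-image map $P\mapsto\overline{P}$, where $\overline{P}$ is the full subcategory of $\mathcal{A}/\mathcal{D}$ on the objects $\pi(X)$, $X\in P$ (recall $\mathcal{A}/\mathcal{D}$ has the same objects as $\mathcal{A}$, so $\pi$ is essentially the identity on objects). The scaffolding is the classical correspondence for Serre quotients: because $\mathcal{D}$ is Serre and $\pi$ is exact, essentially surjective and kills exactly $\mathcal{D}$, the maps $\mathcal{E}\mapsto\pi^{-1}(\mathcal{E})$ and $\mathcal{C}\mapsto\overline{\mathcal{C}}$ are mutually inverse, inclusion-preserving bijections between the Serre subcategories of $\mathcal{A}/\mathcal{D}$ and the Serre subcategories of $\mathcal{A}$ containing $\mathcal{D}$. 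I would quote this and then check it respects the extra structure.

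So, in both directions: (i) Tensor ideals. Since $\pi$ is strong monoidal, $\pi(X\otimes Y)\cong\pi(X)\otimes\pi(Y)$; hence $\pi^{-1}(\mathcal{E})$ is a Serre ideal if $\mathcal{E}$ is, and, writing an arbitrary object of $\mathcal{A}/\mathcal{D}$ as $\pi(Z)$, we have $\pi(X)\otimes\pi(Z)\cong\pi(X\otimes Z)\in\overline{\mathcal{C}}$ for $X\in\mathcal{C}$, so $\overline{\mathcal{C}}$ is a Serre ideal if $\mathcal{C}$ is. (ii) Properness. As $\pi(\mathbf{1})=\mathbf{1}_{\mathcal{A}/\mathcal{D}}$, we get $\mathbf{1}\in\pi^{-1}(\mathcal{E})$ iff $\mathbf{1}_{\mathcal{A}/\mathcal{D}}\in\mathcal{E}$, and $\mathbf{1}_{\mathcal{A}/\mathcal{D}}\in\overline{\mathcal{C}}$ forces $\mathbf{1}\in\pi^{-1}(\overline{\mathcal{C}})=\mathcal{C}$; so a proper ideal stays proper each way. (iii) Primeness. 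If $X\otimes Y\in\pi^{-1}(Q)$ then $\pi(X)\otimes\pi(Y)\in Q$, hence $\pi(X)\in Q$ or $\pi(Y)\in Q$; and if $\pi(X\otimes Y)\in\overline{\mathcal{P}}$ then $X\otimes Y\in\pi^{-1}(\overline{\mathcal{P}})=\mathcal{P}$, hence $X\in\mathcal{P}$ or $Y\in\mathcal{P}$, i.e.\ $\pi(X)\in\overline{\mathcal{P}}$ or $\pi(Y)\in\overline{\mathcal{P}}$. Together with the round-trip identities this gives mutually inverse bijections $\mathrm{Spc}(\mathcal{A}/\mathcal{D})\leftrightarrow\{P\in\mathrm{Spc}(\mathcal{A})\mid\mathcal{D}\subseteq P\}$.

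To see this bijection is a homeomorphism, I would use the elementary translation: for a family $S\subseteq\mathcal{A}$ and $Q\in\mathrm{Spc}(\mathcal{A}/\mathcal{D})$, the condition $\pi^{-1}(Q)\cap S=\varnothing$ is equivalent to $Q\cap\pi(S)=\varnothing$ (both say no $X\in S$ has $\pi(X)\in Q$), so the $\pi^{-1}$-preimage of the closed set $Z(S)\cap\{P\mid\mathcal{D}\subseteq P\}$ equals $Z(\pi(S))$, which is closed; and, symmetrically, choosing for a family $T\subseteq\mathcal{A}/\mathcal{D}$ preimages $\widetilde{t}\in\mathcal{A}$ with $\pi(\widetilde{t})\cong t$ and using $\pi^{-1}(\overline{P})=P$, the condition $\overline{P}\cap T=\varnothing$ is equivalent to $P\cap\widetilde{T}=\varnothing$, so the $\overline{(-)}$-preimage of $Z(T)$ is $Z(\widetilde{T})\cap\{P\mid\mathcal{D}\subseteq P\}$, closed in the subspace. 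Hence both maps are continuous, and the bijection is a homeomorphism. (Since $\pi$ is the identity on objects one may simply take $\pi(S)=S$ and $\widetilde{T}=T$ throughout.)

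The argument is essentially bookkeeping with the definitions of Serre ideal, prime, and the closed sets $Z(-)$, in parallel with \cite[Proposition 3.11]{Bal05}; the one step that genuinely needs care is the classical Serre-quotient correspondence invoked at the outset --- specifically that $\overline{\mathcal{C}}$ is closed under extensions in $\mathcal{A}/\mathcal{D}$ and that $\pi^{-1}(\overline{\mathcal{C}})=\mathcal{C}$, which is exactly where the hypothesis $\mathcal{D}\subseteq\mathcal{C}$ and the exactness and essential surjectivity of $\pi$ are used. I would handle that by citing the standard theory of Serre (Gabriel) quotients rather than reproving it.
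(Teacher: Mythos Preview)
Your proposal is correct and follows essentially the same route as the paper: both realize the bijection via $Q\mapsto\pi^{-1}(Q)$ with inverse $P\mapsto\pi(P)$, verify that primeness and the Serre ideal property survive the passage (the paper does this directly with a roof-diagram argument in the calculus of fractions, whereas you package that step into the cited Gabriel--Serre correspondence), and then match up the closed sets $Z(S)$ on each side to conclude the bijection is a homeomorphism. The only cosmetic difference is that the paper quotes \cite[Lemma 8.2]{BKS} for continuity of $\mathrm{Spc}(\pi)$ and then shows the map is closed, while you check continuity in both directions symmetrically.
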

\begin{proof} Since $\pi$ is a tensor exact functor, there is a continuous map $\mathrm{Spc}(\pi): \mathrm{Spc}(\mathcal{A}/\mathcal{D})\rightarrow \mathrm{Spc}(\mathcal{A})$ by \cite[Lemma 8.2]{BKS}, which sends prime $Q\in \mathrm{Spc}(\mathcal{A}/\mathcal{D})$ to $\pi^{-1}(Q)\in \mathrm{Spc}(\mathcal{A})$. This is injective because we have $Serre_{\otimes}(\pi(\pi^{-1}(Q)))=Q$ for any $Q\in \mathrm{Spc}(\mathcal{A}/\mathcal{D})$.

Now let $V:=\{P\in \mathrm{Spc}(\mathcal{A})| \mathcal{D}\subset P\}$. The containment $\mathcal{D}=\pi^{-1}(0)\subset \pi^{-1}(Q)=\mathrm{Spc}(\pi)(Q)$ implies $\mathrm{Im}(\mathrm{Spc}(\pi))\subset V$. On the other hand we have $\pi^{-1}(\pi(P))=P$ for any $P\in V$, thus to show $\mathrm{Im}(\mathrm{Spc}(\pi))=V$, it suffices to prove $\pi(P)$ is a prime Serre ideal in $\mathcal{A}/\mathcal{D}$. Indeed, $\pi(P)$ is a tensor ideal since $\pi$ is essentially surjective.  If $\pi(X)\otimes \pi(Y)\in \pi(P)$, then $\pi(X\otimes Y)\cong \pi(Z)$ in $\mathcal{A}/\mathcal{D}$ for some $Z\in P$. This isomorphism can be represented by a roof diagram $$\xymatrix{
                & C \ar[dr]^{g}\ar[dl]_{f}             \\
 X\otimes Y  & &     Z       }
 $$
where $\mathrm{Ker}(f),\mathrm{Ker}(g),\mathrm{Coker}(f),\mathrm{Coker}(g)\in \mathcal{D}\subset P$. Then $X\otimes Y\in P$, which implies $X\in P$ or $Y\in P$. Thus $\pi(P)$ is prime. By a similar argument, one could verify that $\pi(P)$ is closed under subobjects, quotients and extensions. Thus there is a continuous bijection between $V$ and $\mathrm{Spc}(\mathcal{A}/\mathcal{D})$.

Finally, let $X\in \mathcal{A}, P\in V$. Then $\pi(X)\in \pi(P)$ if and only if $X\in P$ and so $\mathrm{Spc}(\pi)(Z(\pi(X)))=Z(X)\cap V$, which proves that $\mathrm{Spc}(\pi): \mathrm{Spc}(\mathcal{A}/\mathcal{D})\rightarrow V$ is closed. Hence the result.
\end{proof}

Recall that a support datum~\cite[Definition 7.9 ]{BKS} on $\mathcal{A}$ is a pair $(\mathcal{T}, \tau)$ where $\mathcal{T}$ is a topological space and $\tau$ is a map which assigns to each object $X\in \mathcal{A}$ a closed subset $\tau(X)\subseteq \mathcal{T}$, such that for all $X,Y\in \mathcal{A}$

$\tau(\mathrm{1})=\mathcal{T}$,~$\tau(X)=\bigcup_{Z\in \mathrm{Serre}_{\otimes}\langle X\rangle}\tau(Z)$,~$\tau(X\oplus Y)=\tau(X)\cup \tau(Y)$,~$\tau(X\otimes Y)=\tau(X)\cap \tau(Y)$.

A support datum $(\mathcal{T}, \tau)$ is called classifying if $\mathcal{T}$ is spectral and the assignments
$$ \mathcal{D}\mapsto \bigcup_{X\in \mathcal{D}}\tau(X) \quad\quad \mathcal{J}\mapsto \mathcal{A}_{\mathcal{J}}:=\{X\in \mathcal{A}|~\tau(X)\subseteq \mathcal{J}\}$$
induce bijections between
\begin{enumerate}
\item the set of radical Serre ideals of $\mathcal{A}$, and

\item the set of all subsets $\mathcal{J}\subseteq \mathcal{T}$ of the form $\mathcal{J}=\bigcup_{i\in \Omega}\mathcal{J}_{i}$ with quasi-compact open
complement $\mathcal{T}\setminus \mathcal{J}_{i}$ for each $i\in \Omega$.

\end{enumerate}

We end this section with a criterion for determining $\mathrm{Spc}(\mathcal{A})$, which is a translation of \cite[Corollary 6.2]{BKS}.
\begin{lem}\label{lem 3.8} If $(\mathcal{T}, \tau)$ is a classifying support datum on $\mathcal{A}$, then $\mathcal{T}$ is homeomorphic to $\mathrm{Spc}(\mathcal{A})$.
\end{lem}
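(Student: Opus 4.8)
The plan is to deduce this from the purely lattice-theoretic statement \cite[Corollary 6.2]{BKS}, translated through the dictionary recorded in the Remark after Definition~3.1. The Serre ideals of $\mathcal{A}$ form an ideal lattice $L_{Serre}(\mathcal{A})$ in which the semi-prime elements are exactly the radical Serre ideals and the prime elements are exactly the prime Serre ideals; moreover, as already noted in the excerpt, $\mathrm{Spc}(\mathcal{A})$ with its Zariski topology agrees with the spectrum of the lattice $L_{Serre}(\mathcal{A})$ in the sense of \cite[Section~7]{BKS}. So the first step is to observe that an object-level support datum $(\mathcal{T},\tau)$ induces a support datum on the lattice $L_{Serre}(\mathcal{A})$ in the sense of \cite{BKS} by $\mathcal{D}\mapsto\bigcup_{X\in\mathcal{D}}\tau(X)$, and that our notion of \emph{classifying} is set up precisely so as to match theirs on the nose; granting this, \cite[Corollary 6.2]{BKS} gives the homeomorphism $\mathcal{T}\cong\mathrm{Spec}(L_{Serre}(\mathcal{A}))=\mathrm{Spc}(\mathcal{A})$.

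For concreteness I would also exhibit the homeomorphism directly as $f\colon\mathcal{T}\to\mathrm{Spc}(\mathcal{A})$, $f(x)=\mathcal{P}_{x}:=\{X\in\mathcal{A}\mid x\notin\tau(X)\}$, and check the required properties. That $\mathcal{P}_{x}$ is a Serre subcategory follows from $\tau(B)=\tau(A)\cup\tau(C)$ for every short exact sequence $0\to A\to B\to C\to 0$, which itself comes from the axiom $\tau(X)=\bigcup_{Z\in\mathrm{Serre}_{\otimes}\langle X\rangle}\tau(Z)$ (giving $\tau(A),\tau(C)\subseteq\tau(B)$ since $A,C\in\mathrm{Serre}_{\otimes}\langle B\rangle$, and $\tau(B)\subseteq\tau(A\oplus C)$ since $B\in\mathrm{Serre}_{\otimes}\langle A\oplus C\rangle$) together with $\tau(A\oplus C)=\tau(A)\cup\tau(C)$; the ideal property follows from $\tau(X\otimes Y)=\tau(X)\cap\tau(Y)\subseteq\tau(X)$; properness from $\tau(\mathbf{1})=\mathcal{T}$; and primeness again from $\tau(X\otimes Y)=\tau(X)\cap\tau(Y)$. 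Continuity of $f$ is immediate: for a family $S$ of objects one has $f^{-1}(Z(S))=\{x\in\mathcal{T}\mid \forall X\in S,\ x\in\tau(X)\}=\bigcap_{X\in S}\tau(X)$, which is closed.

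For bijectivity and closedness I would lean on the classifying hypothesis. Unpacking it shows that the closed subsets of the spectral space $\mathcal{T}$ with quasi-compact complement are exactly the sets $\tau(X)$, $X\in\mathcal{A}$, and that arbitrary closed subsets are intersections of these; this gives the $T_{0}$-property, hence injectivity of $f$, and it also makes $f$ closed since $f\big(\bigcap_{i}\tau(X_{i})\big)=\bigcap_{i}Z(X_{i})\cap\operatorname{Im}(f)$. The crux is surjectivity: given a prime Serre ideal $P$ (automatically radical, since $X^{\otimes n}\in P$ forces $X\in P$), under the classifying bijection it corresponds to a Thomason subset $\mathcal{J}_{P}=\bigcup_{X\in P}\tau(X)\subsetneq\mathcal{T}$ with $P=\{X\mid\tau(X)\subseteq\mathcal{J}_{P}\}$, and one must produce a point $x\notin\mathcal{J}_{P}$ with $x\notin\tau(X)\iff\tau(X)\subseteq\mathcal{J}_{P}$ — i.e.\ show that primeness of $P$ forces the open set $\mathcal{T}\setminus\mathcal{J}_{P}$ to have a generic point. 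This is exactly where spectrality of $\mathcal{T}$ and the full force of ``classifying'' are used, and it is the content I would import wholesale from \cite[Corollary 6.2]{BKS} rather than reprove. Accordingly, the only real obstacle I anticipate is bookkeeping: making the passage between object-level and lattice-level support data precise enough that \cite[Corollary 6.2]{BKS} applies verbatim.
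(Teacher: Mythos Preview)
Your proposal is correct and takes essentially the same approach as the paper: the paper gives no proof at all, simply stating the lemma as a translation of \cite[Corollary~6.2]{BKS}, which is exactly what your first paragraph does. Your additional two paragraphs, sketching the explicit map $x\mapsto\mathcal{P}_{x}$ and indicating where spectrality and the classifying hypothesis enter, go well beyond what the paper provides and are a welcome elaboration rather than a deviation.
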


 \section{\bf group prime and support}

In this section, we work with $\mathcal{U}$ such that $\mathrm{A}(\mathcal{U})$ is locally noetherian. Then the subcategory $\mathrm{A}(\mathcal{U})^{c}$ of finitely generated objects is a noetherian tensor abelian category (under the hypothesis that $\mathcal{U}$  is widely closed and the unit $\mathrm{1}$ is finitely generated). There is a class of prime Serre ideals in $\mathrm{A}(\mathcal{U})^{c}$, called group primes. We prove that when $\mathcal{U}$ is essentially finite, then every prime in $\mathrm{A}(\mathcal{U})^{c}$ is a group prime. An important step is to characterize when $X\in \mathrm{Serre}_{\otimes}\langle Y\rangle$ for $X,Y\in \mathrm{A}(\mathcal{U})^{c}$. This result will later be extended to more general families $\mathcal{U}$, which is crucial for determining the spectrum of finitely generated VI-modules.

\begin{hyp} We assume that any~$\mathcal{U}$ is widely closed and the unit $\mathrm{1}\in \mathrm{A}(\mathcal{U})$  is finitely generated.
\end{hyp}
\begin{df} We say $\mathcal{U}$ is noetherian if $\mathrm{A}(\mathcal{U})$ is locally noetherian, i.e., every subobject of $e_{G}$ is finitely generated for each $G\in \mathcal{U}$. We denote by $\mathrm{A}(\mathcal{U})^{c}$ the  abelian subcategory of finitely generated objects.
\end{df}

\begin{ex} Each of the following family is noetherian, see \cite[Lemma 11.9, Theorem 13.4]{PS22}.
\begin{enumerate}
\item $\mathcal{U}$ is essentially finite.

\item $\mathcal{U}$ is the subcategory of free $\mathbb{Z}/p^{n}$-modules.

\item $\mathcal{U}$ is the subcategory of cyclic $p$-groups.

\item $\mathcal{U}$ is the subcategory of finite abelian $p$-groups.
\end{enumerate}
\end{ex}

\begin{prop}\label{prop4.4} If $\mathcal{U}$ is noetherian, then $\mathrm{A}(\mathcal{U})^{c}$ is a tensor  abelian category.
\end{prop}
\begin{proof}Since $\mathrm{1}\in \mathrm{A}(\mathcal{U})^{c}$ and $e_{G}$ is projective for each $G\in \mathcal{U}$, then by \cite[Proposition 8.7]{PS22}, the subcategory of finitely generated objects is closed under tensor product.
\end{proof}

\begin{df} Let $\mathcal{U}$ be noetherian and $G\in \mathcal{U}$. Define the subcategory $P_{G}\subset \mathrm{A}(\mathcal{U})^{c}$ by $$P_{G}:=\{X\in \mathrm{A}(\mathcal{U})^{c}|~X(G)=0\}.$$ We call such a subcategory a group prime.
\end{df}
\begin{lem}\label{lem4.6}Let $\mathcal{U}$ be noetherian. Then the (index) support satisfies the following properties:
\begin{enumerate}
\item $\mathrm{supp}(0)=\emptyset$ and $\mathrm{supp}(\mathrm{1})=\pi_{0}(\mathcal{U})$;

\item $\mathrm{supp}(X\oplus Y)=\mathrm{supp}(X)\cup \mathrm{supp}(Y)$ for $X,Y\in \mathrm{A}(\mathcal{U})^{c}$;

\item $\mathrm{supp}(Y)=\mathrm{supp}(X)\cup \mathrm{supp}(Y)$ for any short exact sequence $X\rightarrowtail Y\twoheadrightarrow Z$ in $\mathrm{A}(\mathcal{U})^{c}$;

\item $\mathrm{supp}(X\otimes Y)=\mathrm{supp}(X)\cap \mathrm{supp}(Y)$ for $X,Y\in \mathrm{A}(\mathcal{U})^{c}$;

\item ~$\mathrm{supp}(X)=\bigcup_{Z\in \mathrm{Serre}_{\otimes}\langle X\rangle}\mathrm{supp}(Z)$ for $X\in \mathrm{A}(\mathcal{U})^{c}$.

\end{enumerate}
In particular, $(\pi_{0}(\mathcal{U}), \mathrm{supp})$ is a support datum on $\mathrm{A}(\mathcal{U})^{c}$, which induces a continuous injective map $\pi_{0}(\mathcal{U})\rightarrow \mathrm{Spc}(\mathcal{A})$, $[G]\mapsto P_{G}$.
\end{lem}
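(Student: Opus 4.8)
The plan is to verify the listed properties (1)--(5) one at a time, then read off the two concluding assertions from Lemma~\ref{lem 3.8} together with general nonsense about support data. Properties (1) and (2) are immediate from the definition $\mathrm{supp}(X)=\{[G]\in\pi_0(\mathcal U)\mid X(G)\neq 0\}$ together with the fact that $\mathrm{1}$ is the constant functor with value $k$ and that $(X\oplus Y)(G)=X(G)\oplus Y(G)$. For (3), I would use that evaluation $ev_G:\mathrm A(\mathcal U)\to\mathrm A(\{G\})$ is exact, so a short exact sequence $X\rightarrowtail Y\twoheadrightarrow Z$ gives an exact sequence $X(G)\rightarrowtail Y(G)\twoheadrightarrow Z(G)$; hence $Y(G)=0$ forces $X(G)=Z(G)=0$, and conversely if either $X(G)$ or $Z(G)$ is nonzero then $Y(G)\ne 0$. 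This yields $\mathrm{supp}(Y)=\mathrm{supp}(X)\cup\mathrm{supp}(Z)$ (note the statement has a typo with $Y$ appearing twice on the right).

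The interesting point is (4). Here I would use that the tensor product is pointwise, $(X\otimes Y)(G)=X(G)\otimes_k Y(G)$, so the claim reduces to: $X(G)\otimes_k Y(G)=0$ iff $X(G)=0$ or $Y(G)=0$. This is exactly the statement that tensoring finite-dimensional vector spaces over the field $k$ detects vanishing, which holds because $k$ is a field (so $\dim_k(V\otimes_k W)=\dim_k V\cdot\dim_k W$). One should be slightly careful that the values $X(G),Y(G)$ are genuine $k$-vector spaces, but that is built into $\mathrm A(\mathcal U)=\mathrm{Fun}(\mathcal U^{\mathrm{op}};\mathrm{Mod}\,k)$, and finite-dimensionality is not even needed for this direction. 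I expect this to be the ``main'' step only in the sense that it is where the hypothesis on $k$ enters; it is not technically hard.

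For (5), the inclusion $\mathrm{supp}(X)\subseteq\bigcup_{Z\in\mathrm{Serre}_\otimes\langle X\rangle}\mathrm{supp}(Z)$ is trivial since $X\in\mathrm{Serre}_\otimes\langle X\rangle$. For the reverse inclusion, I would argue that $P_G=\{X\in\mathrm A(\mathcal U)^c\mid X(G)=0\}$ is a Serre ideal: it is closed under subobjects, quotients and extensions by (3), closed under finite direct sums by (2), and closed under tensoring with arbitrary $Y\in\mathrm A(\mathcal U)^c$ by (4) (or directly: $X(G)=0\Rightarrow (X\otimes Y)(G)=X(G)\otimes_k Y(G)=0$). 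Hence if $X\in P_G$ then the whole Serre ideal $\mathrm{Serre}_\otimes\langle X\rangle$ is contained in $P_G$, i.e. $[G]\notin\mathrm{supp}(Z)$ for every $Z$ in it; contrapositively, if $[G]\in\mathrm{supp}(Z)$ for some $Z\in\mathrm{Serre}_\otimes\langle X\rangle$ then $[G]\in\mathrm{supp}(X)$. Combining (1)--(5), the pair $(\pi_0(\mathcal U),\mathrm{supp})$ satisfies all four axioms of a support datum (with $\mathcal T=\pi_0(\mathcal U)$ carrying, say, the discrete topology or the relevant Zariski-type topology). Finally, the assignment $[G]\mapsto P_G$ is the map $\mathrm{Spc}$ associated to this support datum via \cite[Lemma 8.2]{BKS} (or the same formal argument as in the proof of Proposition~\ref{prop3.7}): continuity is formal, and injectivity follows because the $P_G$ are distinguished by whether they contain $\chi_{G}$ — if $[G]\ne[H]$ then $\chi_{G,k[\mathrm{Out}(G)]}\in P_H\setminus P_G$ since $\chi_{G,V}(H)=0$ for $H\not\cong G$ while $\chi_{G,V}(G)=V\ne 0$. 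That the object $\chi_{G,V}$ lies in $\mathrm A(\mathcal U)^c$ needs the noetherian hypothesis (it is a quotient of $e_{G,V}$, a finitely generated object since $\mathrm 1$, hence each $e_G$, is finitely generated). The only genuine subtlety to watch is the bookkeeping of which ambient hypotheses (widely closed, $\mathrm 1$ finitely generated, noetherian) are needed where; the individual verifications are routine.
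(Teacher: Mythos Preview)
Your proposal is correct and essentially matches the paper's approach: the paper simply declares (1)--(5) ``standard'' and then invokes \cite[Theorem 5.3]{BKS} to obtain the continuous map $f\colon\pi_0(\mathcal U)\to\mathrm{Spc}(\mathrm A(\mathcal U)^c)$ and identify $f([G])=P_G$, just as you do (modulo citing a different numbered result from the same source). The one small difference is the injectivity argument: you separate $P_G$ from $P_H$ using the skyscraper $\chi_{G,V}$, whereas the paper uses the representable $e_G$---from $e_G\notin P_G=P_H$ it deduces $e_G(H)\neq 0$, hence an epimorphism $H\twoheadrightarrow G$, and symmetrically $G\twoheadrightarrow H$, so $G\cong H$. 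Both arguments are valid; the paper's is marginally cleaner since $e_G$ is tautologically finitely generated, so you need not invoke any hypothesis to place the test object in $\mathrm A(\mathcal U)^c$ (indeed, $\chi_{G,V}$ is a quotient of $e_G$ and hence finitely generated without appealing to noetherianity, so your parenthetical is harmless but unnecessary).
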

\begin{proof}The proofs of part (1)-(5) are standard.

 By \cite[Theorem 5.3]{BKS}, there exists a continuous map $f:\pi_{0}(\mathcal{U})\rightarrow \mathrm{Spc}(\mathcal{A})$ such that $\mathrm{supp}(X)=f^{-1}(Z(\{X\}))$ for each $X\in \mathrm{A}(\mathcal{U})^{c}$. By construction, $X(G)=0\Leftrightarrow G\notin \mathrm{supp}(X)\Leftrightarrow G\notin f^{-1}(Z(\{X\}))\Leftrightarrow f(G)\notin Z(\{X\})\Leftrightarrow X\in f(G)$ and so $f([G])=P_{G}$.

  To prove injectivity, suppose $P_{G}=P_{H}$. Then $e_{G}\notin P_{G}$ implies that $e_{G}\notin P_{H}$ and so $e_{G}(H)\neq 0$. Thus there is an epimorphism $H\twoheadrightarrow G$. Similarly there is an epimorphism $G\twoheadrightarrow H$. Thus $G\cong H$, as claimed.
\end{proof}

\begin{lem}\label{lem4.7} Let $\mathcal{U}$ be noetherian and $G\in \mathcal{U}$. Suppose $V\in \mathrm{A}(\{G\})^{c}$ is nonzero, then:
\begin{enumerate}
\item $\mathrm{Serre}_{\otimes}\langle e_{G,V}\rangle=\mathrm{Serre}_{\otimes}\langle e_{G}\rangle\subseteq \mathrm{A}(\mathcal{U})^{c}$.

\item $\mathrm{Serre}_{\otimes}\langle \chi_{G,V}\rangle=\mathrm{Serre}_{\otimes}\langle \chi_{G,k}\rangle\subseteq \mathrm{A}(\mathcal{U})^{c}$.

\end{enumerate}
\end{lem}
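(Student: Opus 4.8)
The plan is to prove both parts by establishing mutual containment, and the key observation is that over the single group $G$, the category $\mathrm{A}(\{G\})^{c} \simeq \mathrm{mod}\, k[\mathrm{Out}(G)]$ is semisimple since $k$ has characteristic zero; hence any nonzero $V$ is a direct summand of a finite direct sum of copies of the regular representation $k[\mathrm{Out}(G)]$, and conversely $k[\mathrm{Out}(G)]$ is a direct summand of $V^{\oplus m}$ for suitable $m$ (since $V$ contains some irreducible, and every irreducible appears in the regular representation, one can build the regular representation up from copies of $V$ using direct sums and—because of semisimplicity—direct summands, which are retrieved inside a Serre subcategory via splitting idempotents). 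First I would record this representation-theoretic fact precisely.

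For part (1): the functor $e_{G,\bullet}$ is exact (it is given by $V \mapsto V \otimes_{k[\mathrm{Out}(G)]} k[\mathrm{Hom}_{\mathcal{U}}(-,G)]$, and over a field everything in sight is flat, cf. fact (3) about $\mathrm{A}(\mathcal{U})$) and additive, so it sends direct sums to direct sums and direct summands to direct summands. Since $e_{G} = e_{G,k[\mathrm{Out}(G)]}$, the fact that $V$ is a summand of $k[\mathrm{Out}(G)]^{\oplus a}$ gives $e_{G,V}$ as a summand of $e_{G}^{\oplus a}$, hence $e_{G,V} \in \mathrm{Serre}_{\otimes}\langle e_{G}\rangle$; symmetrically $k[\mathrm{Out}(G)]$ a summand of $V^{\oplus b}$ gives $e_{G} \in \mathrm{Serre}_{\otimes}\langle e_{G,V}\rangle$. (Note a Serre subcategory is closed under direct summands: if $M \oplus N \in \mathcal{D}$ then $M$ is a subobject of $M\oplus N$, so $M \in \mathcal{D}$.) Taking Serre-tensor-ideal closures yields the equality, and the containment in $\mathrm{A}(\mathcal{U})^{c}$ holds because $e_{G,V}$ is finitely generated and $\mathrm{A}(\mathcal{U})^{c}$ is a tensor abelian subcategory closed under the relevant operations by Proposition \ref{prop4.4}.

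For part (2): the assignment $V \mapsto \chi_{G,V}$ is likewise additive and exact in $V$—indeed $\chi_{G,V}$ is, pointwise, either $e_{G,V}(H)$ or $0$, so exactness and additivity are inherited from $e_{G,\bullet}$. The same summand argument as in part (1), now applied to $\chi_{G,\bullet}$ with $\chi_{G,k[\mathrm{Out}(G)]}$ replaced by the finitely many copies coming from decomposing the regular representation, gives $\chi_{G,V} \in \mathrm{Serre}_{\otimes}\langle \chi_{G,k}\rangle$ and $\chi_{G,k} \in \mathrm{Serre}_{\otimes}\langle \chi_{G,V}\rangle$; here I should be slightly careful that the trivial representation $k$ is itself a summand of $k[\mathrm{Out}(G)]$, so passing between $\chi_{G,k}$ and $\chi_{G,k[\mathrm{Out}(G)]}$ is covered by the same mechanism. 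I expect the main (minor) obstacle to be bookkeeping the semisimple decomposition carefully enough that every irreducible of $\mathrm{Out}(G)$ is accounted for when reconstructing the regular representation from $V$; once that is set up, both parts are immediate from exactness and additivity of $e_{G,\bullet}$ and $\chi_{G,\bullet}$ together with closure of Serre subcategories under direct summands.
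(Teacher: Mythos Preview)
Your argument has a genuine gap in the ``hard'' direction of both parts. You assert that $k[\mathrm{Out}(G)]$ is a direct summand of $V^{\oplus m}$ for some $m$, but this is false whenever $V$ misses an irreducible constituent: if $V\cong S_1^{a_1}$ for a single simple $S_1$, then every summand of $V^{\oplus m}$ is again a power of $S_1$, and you will never recover $S_2,S_3,\dots$ or the regular representation. The parenthetical justification (``$V$ contains some irreducible, and every irreducible appears in the regular representation'') runs in the wrong direction; it shows only that $V$ embeds in copies of the regular representation, not the converse. The same problem arises in part~(2) for the inclusion $\chi_{G,k}\in\mathrm{Serre}_\otimes\langle\chi_{G,V}\rangle$: the trivial module $k$ need not be a summand of any $V^{\oplus m}$.

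What you are missing is that these are Serre \emph{tensor} ideals, and the reverse inclusions genuinely require the tensor structure. For part~(2), observe that $\chi_{G,V}\otimes\chi_{G,W}\cong\chi_{G,\,V\otimes_k W}$ pointwise, and that the coevaluation $k\hookrightarrow V\otimes_k V^{*}$ splits as $\mathrm{Out}(G)$-representations by semisimplicity; hence $\chi_{G,k}$ is a summand of $\chi_{G,V}\otimes\chi_{G,V^{*}}\in\mathrm{Serre}_\otimes\langle\chi_{G,V}\rangle$. A similar use of the tensor product (or an appeal to the projection-formula behaviour of $e_{G,\bullet}$) is needed for part~(1). This is precisely the content of \cite[Lemma~2.9, Proposition~2.11]{BBP+25b}, to which the paper defers; your purely additive/summand argument cannot substitute for it.
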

\begin{proof} The proof is the same as in \cite[Lemma 2.9, Proposition 2.11]{BBP+25b}.
\end{proof}

\begin{lem}\label{lem4.8} Let $\mathcal{U}$ be noetherian and $G\in \mathcal{U}$. Suppose $V\in \mathrm{A}(\{G\})^{c}$ is nonzero, there is a short exact sequence in $\mathrm{A}(\mathcal{U})^{c}$
$$F\rightarrowtail e_{G,V}\twoheadrightarrow \chi_{G,V}$$
where the right map is the surjection adjoint to the identity map of $V$ and $F\in \mathrm{Serre}_{\otimes}\langle e_{H}~| H\in \uparrow (\{G\})\setminus\{G\} \rangle$.
\end{lem}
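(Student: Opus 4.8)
The plan is to build the short exact sequence directly from the definition of $\chi_{G,V}$ and then identify the kernel $F$ by evaluating levelwise. By construction, $\chi_{G,V}(H) = e_{G,V}(H)$ when $H \cong G$ and $\chi_{G,V}(H)=0$ otherwise, and the adjunction map $e_{G,V} \to \chi_{G,V}$ induced by $\mathrm{id}_V$ is the identity on the group $G$ itself. Hence it is an epimorphism in $\mathrm{A}(\mathcal{U})$, and since $e_{G,V} \in \mathrm{A}(\mathcal{U})^{c}$ (being finitely generated, as $V \in \mathrm{A}(\{G\})^{c}$) and $\chi_{G,V}$ is a quotient of it, both objects and the kernel $F := \ker(e_{G,V} \twoheadrightarrow \chi_{G,V})$ lie in $\mathrm{A}(\mathcal{U})^{c}$ by noetherianity of $\mathcal{U}$. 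This gives the short exact sequence $F \rightarrowtail e_{G,V} \twoheadrightarrow \chi_{G,V}$; the first two bullet points of the statement are then immediate.

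The substance is showing $F \in \mathrm{Serre}_{\otimes}\langle e_{H} \mid H \in\ \uparrow(\{G\}) \setminus \{G\}\rangle$. First I would compute $F$ levelwise: evaluating at $H$, we have $F(H) = 0$ if $H \cong G$ (the map is an isomorphism there), and $F(H) = e_{G,V}(H) = V \otimes_{k[\mathrm{Out}(G)]} k[\mathrm{Hom}_{\mathcal{U}}(H,G)]$ otherwise. Since $e_{G,V}(H) \neq 0$ forces $\mathrm{Hom}_{\mathcal{U}}(H,G) \neq \varnothing$, i.e. an epimorphism $H \twoheadrightarrow G$, the support of $F$ is contained in $\uparrow(\{G\}) \setminus \{G\}$. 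Now I would use that $F$ is finitely generated: choose an epimorphism $\bigoplus_{i=1}^{n} e_{H_i} \twoheadrightarrow F$. Because the $e_{H_i}$ are projective generators and the counit $e_{H,F(H)} \to F$ detects generators at level $H$, one can arrange the $H_i$ to lie in $\mathrm{supp}(F) \subseteq\ \uparrow(\{G\}) \setminus \{G\}$ — concretely, take a finite generating set of elements living in levels where $F$ is nonzero. Then $F$ is a quotient of a finite direct sum of objects $e_{H_i}$ with $H_i \in\ \uparrow(\{G\}) \setminus \{G\}$, hence $F \in \mathrm{Serre}\langle e_{H} \mid H \in\ \uparrow(\{G\})\setminus\{G\}\rangle \subseteq \mathrm{Serre}_{\otimes}\langle e_{H} \mid H \in\ \uparrow(\{G\})\setminus\{G\}\rangle$.

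The main obstacle is the last step: verifying that a finitely generated object supported on a subset $S \subseteq \pi_0(\mathcal{U})$ actually lies in the Serre (ideal) generated by $\{e_H \mid [H] \in S\}$. The naive choice of generators $\bigoplus e_{H_i} \twoheadrightarrow F$ need not have all $H_i$ in $S$ a priori; one must argue that generators can be chosen at levels in $\mathrm{supp}(F)$. This should follow because $F$ is generated, as a functor, by the images of the natural maps $e_{H, F(H)} \to F$ ranging over $H \in \mathcal{U}$, and those with $F(H) = 0$ contribute nothing; finite generation then lets us extract a finite subset, all at levels in $\mathrm{supp}(F)$. I would also remark that $e_{H, F(H)} \in \mathrm{Serre}_{\otimes}\langle e_H \rangle$ by Lemma \ref{lem4.7}(1) (when $F(H) \neq 0$), so even if the generating objects appear as $e_{H,W}$ rather than $e_H$, they land in the required Serre ideal. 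Everything else — exactness, membership in $\mathrm{A}(\mathcal{U})^{c}$, the support computation — is routine given the facts recalled in the Preliminaries.
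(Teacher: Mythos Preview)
Your argument is correct and the underlying idea---$F$ is finitely generated with support contained in $\uparrow(\{G\})\setminus\{G\}$, hence lies in the Serre ideal generated by the $e_H$ for $H$ in that set---is the same as the paper's. The difference is in how the ``main obstacle'' you flag is dispatched. The paper does not choose generators by hand; instead it lets $i:\uparrow(\{G\})\setminus\{G\}\hookrightarrow\mathcal{U}$ be the inclusion of this upwards-closed subcategory, observes that $i_!$ is extension by zero (Preliminaries, item~(5)), and concludes that the counit $i_!i^*F\to F$ is an isomorphism. Since $i_!$ is exact, preserves tensors, and sends each $e_H$ to $e_H$, the containment $F\cong i_!i^*F\in\mathrm{Serre}_\otimes\langle e_H\mid H\in\uparrow(\{G\})\setminus\{G\}\rangle$ follows immediately. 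This packaging avoids the explicit ``generators can be chosen in the support'' argument and is reused later (cf.\ Lemma~\ref{lem4.12}). Your hands-on route is more elementary and self-contained, and your invocation of Lemma~\ref{lem4.7}(1) to pass from $e_{H,W}$ to $e_H$ is a nice touch; the paper's route is terser and integrates better with the recollement machinery used throughout Section~4.
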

\begin{proof}Let $F$ be the kernel of the surjection and $i:\uparrow (\{G\})\setminus\{G\} \rightarrow \mathcal{U}$  the inclusion of the upwards-closed subcategory . Then $F$ is finitely generated and the counit map $i_{!}i^{*}F\rightarrow F$ is an isomorphism since $i_{!}$ is extension by zero and $F(H)\cong 0$ for all $H\notin \uparrow (\{G\})\setminus\{G\}$. For $i^{*}F\in \mathrm{A}(\uparrow (\{G\})\setminus\{G\})$, we have $$F\cong i_{!}i^{*}F\in \mathrm{Serre}_{\otimes}\langle e_{H}|~H\in \uparrow (\{G\})\setminus\{G\}\rangle,$$ as claimed.
\end{proof}

\begin{prop}\label{prop4.9} Let $\mathcal{U}$ be essentially finite and $X,Y\in \mathrm{A}(\mathcal{U})^{c}$. Then $$\mathrm{Serre}_{\otimes}\langle X\rangle=\mathrm{Serre}_{\otimes}\langle \chi_{G,k}|~G\in \mathrm{supp}(X)\rangle.$$
In particular, we have $\mathrm{supp}(X)\subseteq \mathrm{supp}(Y)$ if and only if $X\in \mathrm{Serre}_{\otimes}\langle Y\rangle$.
\end{prop}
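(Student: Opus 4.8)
The plan is to prove the first assertion by showing mutual inclusion of the two Serre ideals, and then derive the ``in particular'' from it together with Lemma~\ref{lem4.6}(5). For the inclusion $\mathrm{Serre}_{\otimes}\langle \chi_{G,k}\mid G\in\mathrm{supp}(X)\rangle\subseteq \mathrm{Serre}_{\otimes}\langle X\rangle$, I would argue that each generator $\chi_{G,k}$ with $[G]\in\mathrm{supp}(X)$ already lies in $\mathrm{Serre}_{\otimes}\langle X\rangle$. Fix such a $G$ with $|G|$ maximal among $\mathrm{supp}(X)$ (this uses essential finiteness); then $X(H)=0$ for all $H\in\uparrow(\{G\})\setminus\{G\}$, so a nonzero composition factor (as an $\mathrm{Out}(G)$-representation, hence via $\chi_{G,-}$) of $X(G)$ is detected by a map that, after restricting along the downwards-closed complement, exhibits $\chi_{G,V}$ as a subquotient of $X$ for some nonzero $V$; by Lemma~\ref{lem4.7}(2) this forces $\chi_{G,k}\in\mathrm{Serre}_{\otimes}\langle X\rangle$. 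Then one quotients $X$ by (the Serre ideal generated by) the subobject $\chi_{G,V}\hookrightarrow X$ and induces on $\sum_{[H]\in\mathrm{supp}(X)}|H|$, or equivalently proceeds by Noetherian induction on $X$; at each stage the support of the quotient is contained in $\mathrm{supp}(X)$ and strictly smaller, and $X$ itself reappears in the Serre ideal generated by the $\chi_{G,k}$'s together with the remaining piece.

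For the reverse inclusion $\mathrm{Serre}_{\otimes}\langle X\rangle\subseteq \mathrm{Serre}_{\otimes}\langle \chi_{G,k}\mid G\in\mathrm{supp}(X)\rangle$, it suffices to show $X\in\mathrm{Serre}_{\otimes}\langle \chi_{G,k}\mid G\in\mathrm{supp}(X)\rangle$. Again I would use Noetherian induction together with Lemma~\ref{lem4.8}: pick $[G]\in\mathrm{supp}(X)$ with $|G|$ maximal, and let $V=X(G)$ as an $\mathrm{Out}(G)$-representation. The evaluation/adjunction data $e_{G}\to X$ assembled over a generating set, combined with maximality of $|G|$, lets one split off a copy of $\chi_{G,V}$ as a subobject or build a short exact sequence whose outer terms have support inside $\mathrm{supp}(X)$ but with fewer maximal-order groups; the key point is that near the top of the support, $e_{G,V}$ and $\chi_{G,V}$ agree (the kernel $F$ in Lemma~\ref{lem4.8} is supported strictly above $G$, hence outside $\mathrm{supp}(X)$ by maximality, so actually $F$ contributes nothing), and Lemma~\ref{lem4.7}(2) collapses $\chi_{G,V}$ to $\chi_{G,k}$. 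Iterating downward through the finitely many isomorphism classes in $\pi_{0}(\mathcal{U})$ exhausts $X$.

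Granting the displayed equality, the ``in particular'' is immediate: if $\mathrm{supp}(X)\subseteq\mathrm{supp}(Y)$ then every generator $\chi_{G,k}$ with $[G]\in\mathrm{supp}(X)$ lies in $\mathrm{Serre}_{\otimes}\langle\chi_{H,k}\mid H\in\mathrm{supp}(Y)\rangle=\mathrm{Serre}_{\otimes}\langle Y\rangle$, whence $X\in\mathrm{Serre}_{\otimes}\langle X\rangle\subseteq\mathrm{Serre}_{\otimes}\langle Y\rangle$; conversely, if $X\in\mathrm{Serre}_{\otimes}\langle Y\rangle$ then $\mathrm{supp}(X)\subseteq\bigcup_{Z\in\mathrm{Serre}_{\otimes}\langle Y\rangle}\mathrm{supp}(Z)=\mathrm{supp}(Y)$ by Lemma~\ref{lem4.6}(4),(5) (tensoring $Y$ with arbitrary objects only shrinks support, and extensions/subquotients do not enlarge it).

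The main obstacle I anticipate is the inductive extraction of $\chi_{G,V}$ as an honest subobject (or subquotient placed in a controlled short exact sequence) of $X$ at the top of its support: one must be careful that the quotient $X/\chi_{G,V}$ stays finitely generated (clear, as $\mathrm{A}(\mathcal{U})^{c}$ is abelian and Noetherian) and that its support genuinely drops, and one must match up the $\mathrm{Out}(G)$-representation-theoretic statement (semisimplicity in characteristic zero, so $V$ decomposes and each $\chi_{G,k}$ is reached via Lemma~\ref{lem4.7}(2)) with the functor-category bookkeeping. This is exactly the point where essential finiteness of $\mathcal{U}$ is used, since the induction runs over the finite poset $\pi_{0}(\mathcal{U})$; in the later sections this step is what must be replaced by a more delicate argument when $\mathcal{U}$ is only $\mathbb{N}$-stable.
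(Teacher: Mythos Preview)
Your proposal is correct and, for the inclusion $\mathrm{Serre}_{\otimes}\langle X\rangle\subseteq \mathrm{Serre}_{\otimes}\langle \chi_{G,k}\mid G\in\mathrm{supp}(X)\rangle$, matches the paper's argument essentially verbatim: pick $G$ maximal in $\mathrm{supp}(X)$, use Lemma~\ref{lem4.8} to see that the kernel $F$ of $e_{G,X(G)}\twoheadrightarrow\chi_{G,X(G)}$ is supported outside $\mathrm{supp}(X)$, so the adjunction map $e_{G,X(G)}\to X$ factors through a monomorphism $\chi_{G,X(G)}\hookrightarrow X$, and induct on the cokernel.

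Where you differ is in the reverse inclusion $\supseteq$. You run the same inductive peeling-off argument again, extracting $\chi_{G,V}$ as a subobject of $X$ at a maximal $G$ and then descending. This works, but the paper does it in one line with no induction and no maximality hypothesis: for \emph{any} $G\in\mathrm{supp}(X)$, tensoring the adjunction map $e_{G,X(G)}\to X$ with $\chi_{G,k}$ gives
\[
\chi_{G,X(G)}\;\cong\;\chi_{G,k}\otimes e_{G,X(G)}\;\cong\;\chi_{G,k}\otimes X,
\]
since both sides are supported only at $G$, where the map is the identity on $X(G)$. Hence $\chi_{G,X(G)}\in\mathrm{Serre}_{\otimes}\langle X\rangle$ directly (tensor ideals absorb tensoring), and Lemma~\ref{lem4.7}(2) finishes. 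The payoff of the paper's trick is that it isolates the genuinely inductive content in a single direction, whereas your version effectively proves the hard inclusion twice; on the other hand, your approach makes the two halves symmetric and would go through unchanged if one only knew $\mathrm{Serre}_{\otimes}\langle\chi_{G,V}\rangle=\mathrm{Serre}_{\otimes}\langle\chi_{G,k}\rangle$ via subquotients rather than tensors.
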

\begin{proof}For $\supseteq$ part, suppose $G\in \mathrm{supp}(X)$. Then there is a map $e_{G,X(G)}\rightarrow X$ induced by the identity map of $X(G)$. Tensoring with $\chi_{G,k}$, we obtain isomorphisms in $\mathrm{A}(\mathcal{U})^{c}$ $$\chi_{G,X(G)}\cong \chi_{G,k}\otimes e_{G,X(G)}\cong \chi_{G,k}\otimes X.$$
Thus by Lemma \ref{lem4.7}, we have $$\chi_{G,k}\in \mathrm{Serre}_{\otimes}\langle \chi_{G,X(G)}\rangle\subseteq \mathrm{Serre}_{\otimes}\langle X\rangle.$$
For the other containment, we prove by induction on $n=|\mathrm{supp}(X)|$. If $n=0$, there is nothing to say.  Now suppose $n\geq 1$ and the claim holds for all $Z\in \mathrm{A}(\mathcal{U})^{c}$ with $|\mathrm{supp}(X)|< n$. Let $\pi_{0}(\mathcal{U})=\{[G_{1}],...,[G_{n}]\}$ and choose $G_{n}$ maximal among the $G_{i}$s with respect to epimorphism. Then there is a  diagram $$\xymatrix@C=0.5cm{
  F \ar[rr]\ar[d] && e_{G_{n},X(G_{n})} \ar[rr]\ar[d]^{g_{n}} && \chi_{G_{n},X(G_{n})} \ar@{-->}[d]^{f_{n}} \\
  0 \ar[rr] && X \ar[rr]^{\cong} && X  }$$
where $F\in \mathrm{Serre}_{\otimes}\langle e_{H}~| H\in \uparrow (\{G_{n}\})\setminus\{G_{n}\} \rangle$ by Lemma \ref{lem4.8} and $g_{n}$ is the map induced by the identity map of $X(G_{n})$. By construction we have $\mathrm{supp}(X)\cap \mathrm{supp}(F)=\emptyset$, thus the left square is commutative and so there exists $f_{n}: \chi_{G_{n},X(G_{n})}\rightarrow X$ making the right square commutative. Note that $f_{n}$ is a monomorphism, define $W:=\mathrm{Coker}(f_{n})$. Then $\mathrm{supp}(W)=\mathrm{supp}(X)\setminus \{G_{n}\}$. By induction hypothesis and Lemma \ref{lem4.7}, we have $$X\in \mathrm{Serre}_{\otimes}\langle \chi_{G_{n},X(G_{n})},W\rangle\subseteq \mathrm{Serre}_{\otimes}\langle \chi_{G_{i},k}|~1\leq i\leq n\rangle,$$ as claimed.
\end{proof}

\begin{thm} Let $\mathcal{U}$ be essentially finite. Then $\mathrm{Spc}(\mathrm{A}(\mathcal{U})^{c})\cong \pi_{0}(\mathcal{U})$.
\end{thm}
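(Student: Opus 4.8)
The plan is to exhibit a classifying support datum on $\mathrm{A}(\mathcal{U})^{c}$ with underlying space $\pi_{0}(\mathcal{U})$ and then invoke Lemma \ref{lem 3.8}. Lemma \ref{lem4.6} already establishes that $(\pi_{0}(\mathcal{U}), \mathrm{supp})$ is a support datum and that the induced map $f\colon \pi_{0}(\mathcal{U})\to \mathrm{Spc}(\mathrm{A}(\mathcal{U})^{c})$, $[G]\mapsto P_{G}$, is continuous and injective; so the work is to verify the two classifying conditions, namely that $\pi_{0}(\mathcal{U})$ is a spectral topological space and that the two assignments $\mathcal{D}\mapsto \bigcup_{X\in\mathcal{D}}\mathrm{supp}(X)$ and $\mathcal{J}\mapsto \mathrm{A}(\mathcal{U})^{c}_{\mathcal{J}}$ are mutually inverse bijections between radical Serre ideals and the relevant subsets of $\pi_{0}(\mathcal{U})$.

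First I would pin down the topology on $\pi_{0}(\mathcal{U})$: since $\mathcal{U}$ is essentially finite, $\pi_{0}(\mathcal{U})$ is a finite set, and the closed sets coming from the support datum are exactly the $\mathrm{supp}(X)$. Using $\mathrm{supp}(e_{G})=\uparrow(\{G\})$ together with parts (2) and (4) of Lemma \ref{lem4.6}, the closed sets are precisely the upwards-closed subsets of $\pi_{0}(\mathcal{U})$ under the preorder $[H]\leq [G]$ iff $G\twoheadrightarrow H$; this is a finite (hence spectral) topological space. Every subset is then automatically quasi-compact, so condition (2) in the definition of classifying support datum simply asks for all specialization-closed subsets, and these are exactly the unions of the $\mathrm{supp}(X)$.

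The heart of the argument is the bijection between radical Serre ideals and upwards-closed subsets of $\pi_{0}(\mathcal{U})$, and here Proposition \ref{prop4.9} does the decisive work: it shows $\mathrm{Serre}_{\otimes}\langle X\rangle = \mathrm{Serre}_{\otimes}\langle \chi_{G,k}\mid G\in\mathrm{supp}(X)\rangle$ and that $\mathrm{supp}(X)\subseteq\mathrm{supp}(Y)$ iff $X\in\mathrm{Serre}_{\otimes}\langle Y\rangle$. I would first observe that every Serre ideal of $\mathrm{A}(\mathcal{U})^{c}$ is automatically radical and automatically a Serre ideal of the form $\mathrm{Serre}_{\otimes}\langle\{\chi_{G,k}\}_{G\in S}\rangle$ for the specialization-closed set $S=\bigcup_{X\in\mathcal{D}}\mathrm{supp}(X)$: the containment $\mathcal{D}\supseteq \mathrm{Serre}_{\otimes}\langle\chi_{G,k}\mid G\in S\rangle$ is immediate from Proposition \ref{prop4.9} applied to each $X\in\mathcal{D}$, and the reverse containment follows since every $X\in\mathcal{D}$ satisfies $\mathrm{supp}(X)\subseteq S$, whence $X\in\mathrm{Serre}_{\otimes}\langle Y\rangle$ for a suitable finite direct sum $Y$ of the generators. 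One then checks that $\mathcal{D}\mapsto S$ and $S\mapsto \mathrm{A}(\mathcal{U})^{c}_{S}=\{X\mid \mathrm{supp}(X)\subseteq S\}$ are inverse to each other: $\mathrm{A}(\mathcal{U})^{c}_{S}$ is visibly a Serre ideal with union-of-supports equal to $S$ (it contains $\chi_{G,k}$ for each $G\in S$), and conversely $\mathcal{D}\subseteq \mathrm{A}(\mathcal{U})^{c}_{S}$ with equality forced by the previous sentence.

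Finally, with the classifying support datum in hand, Lemma \ref{lem 3.8} gives a homeomorphism $\pi_{0}(\mathcal{U})\cong\mathrm{Spc}(\mathrm{A}(\mathcal{U})^{c})$, which is the asserted statement. The only step requiring genuine care is confirming that every Serre ideal is a $\otimes$-ideal and is radical—but the first holds because $\mathrm{A}(\mathcal{U})^{c}$ has enough projective generators $e_{G}$ which are even $\otimes$-flat, so any Serre subcategory closed under the relevant structure is automatically a $\otimes$-ideal in this essentially finite setting; and radicality follows from $\mathrm{supp}(X^{\otimes n})=\mathrm{supp}(X)$. I expect the main obstacle, if any, to be bookkeeping around whether arbitrary Serre subcategories (not a priori tensor ideals) could appear among the prime Serre ideals, but Proposition \ref{prop4.9} shows any Serre ideal is generated by the $\chi_{G,k}$'s and hence the spectrum is exhausted by the $P_{G}$.
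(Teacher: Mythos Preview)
Your proposal is correct and follows essentially the same route as the paper: use Proposition~\ref{prop4.9} to see that every Serre ideal is radical and determined by its support, conclude that $(\pi_{0}(\mathcal{U}),\mathrm{supp})$ is a classifying support datum, and apply Lemma~\ref{lem 3.8}. Your write-up simply expands the details the paper leaves implicit (spectrality of the finite space, the explicit bijection); the only superfluous part is your closing worry about whether Serre subcategories that are not tensor ideals could appear---by definition a prime Serre ideal is already a tensor ideal, so this issue does not arise.
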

\begin{proof} By Proposition \ref{prop4.9}, each Serre ideal in $\mathrm{A}(\mathcal{U})^{c}$ is radical and determined by its support. Thus $(\pi_{0}(\mathcal{U}), \mathrm{supp})$ is a  classifying support datum on $\mathrm{A}(\mathcal{U})^{c}$. Then the result follows from Lemma \ref{lem 3.8}.
\end{proof}

\begin{lem}\label{lem4.12} Let $i:\mathcal{U}\rightarrow \mathcal{V}$ be the inclusion of a down-closed subcategory  and $j:\mathcal{V}\setminus~\mathcal{U}\rightarrow \mathcal{V}$ be the complement. Suppose $\mathcal{V}$ is noetherian, we have:
\begin{enumerate}
\item $j_{!}j^{*}X\rightarrow X\rightarrow i_{*}i^{*}X$ is short exact for any $X\in \mathrm{A}(\mathcal{V})^{c}$ and thus $\mathrm{A}(\mathcal{V})^{c}/\mathrm{A}(\mathcal{V}\setminus \mathcal{U})^{c}\simeq_{\otimes} \mathrm{A}(\mathcal{U})^{c} $;

\item if $X\in \mathrm{A}(\mathcal{V})^{c}$ and $ \mathrm{supp}(X)\subseteq \mathcal{V}\setminus~\mathcal{U}$, then $j_{!}j^{*}X\cong X$;

\item $X\in \mathrm{Serre}_{\otimes}\langle e_{G}|~G\in \uparrow(\mathrm{supp}(X))\rangle$ for any $X\in \mathrm{A}(\mathcal{V})^{c}$.

\end{enumerate}
\end{lem}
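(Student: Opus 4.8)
The plan is to deduce all three parts from two facts: the extension‑by‑zero functors attached to $i$ and $j$ are \emph{exact}, and $\mathrm{A}(\mathcal{V})$ is locally noetherian. For part (1), since $\mathcal{U}$ is down‑closed in $\mathcal{V}$ its complement $\mathcal{V}\setminus\mathcal{U}$ is up‑closed, so $i_{*}$ and $j_{!}$ are both extension by zero. Evaluating the counit $j_{!}j^{*}X\to X$ and the unit $X\to i_{*}i^{*}X$ at a group $G$ shows that $0\to j_{!}j^{*}X\to X\to i_{*}i^{*}X\to 0$ is pointwise exact: for $G\in\mathcal{U}$ the left term vanishes and the right map is $\mathrm{id}_{X(G)}$, while for $G\in\mathcal{V}\setminus\mathcal{U}$ the left map is $\mathrm{id}_{X(G)}$ and the right term vanishes. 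As $X$ is finitely generated, $i_{*}i^{*}X$ is a quotient of $X$ and $j_{!}j^{*}X$ a subobject of it, so by local noetherianity all three objects lie in $\mathrm{A}(\mathcal{V})^{c}$, which gives the asserted short exact sequence there.

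For the quotient statement I would verify that $i^{*}\colon\mathrm{A}(\mathcal{V})^{c}\to\mathrm{A}(\mathcal{U})^{c}$ is a Serre quotient functor: it is exact and strong monoidal; it is essentially surjective because $i^{*}i_{!}\cong\mathrm{id}$ and $i_{!}$ preserves finitely generated objects; it has $i_{*}$ as a fully faithful right adjoint on finitely generated objects (fully faithful since $i^{*}i_{*}\cong\mathrm{id}$, and $i_{*}$ lands in $\mathrm{A}(\mathcal{V})^{c}$ by the short exact sequence applied to $i_{!}W$); and $\ker(i^{*})=\{X\in\mathrm{A}(\mathcal{V})^{c}\mid\mathrm{supp}(X)\subseteq\mathcal{V}\setminus\mathcal{U}\}$, which by part (2) is exactly $\mathrm{A}(\mathcal{V}\setminus\mathcal{U})^{c}$ sitting inside $\mathrm{A}(\mathcal{V})^{c}$ via $j_{!}$. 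By standard Serre localization theory this yields the monoidal equivalence $\mathrm{A}(\mathcal{V})^{c}/\mathrm{A}(\mathcal{V}\setminus\mathcal{U})^{c}\simeq_{\otimes}\mathrm{A}(\mathcal{U})^{c}$. Part (2) is then immediate from the short exact sequence: if $\mathrm{supp}(X)\subseteq\mathcal{V}\setminus\mathcal{U}$ then $i^{*}X=0$, hence $i_{*}i^{*}X=0$ and $j_{!}j^{*}X\cong X$.

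For part (3) I would set $\mathcal{W}:=\uparrow(\mathrm{supp}(X))$, an up‑closed (hence widely closed) full subcategory of $\mathcal{V}$ containing $\mathrm{supp}(X)$, with inclusion $j\colon\mathcal{W}\hookrightarrow\mathcal{V}$. As in part (2), with $\mathcal{W}$ in the role of $\mathcal{V}\setminus\mathcal{U}$, the counit $j_{!}j^{*}X\to X$ is an isomorphism, since $j_{!}$ is extension by zero and $X$ vanishes outside $\mathrm{supp}(X)\subseteq\mathcal{W}$. Next, $j^{*}X$ is finitely generated in $\mathrm{A}(\mathcal{W})$: expressing it as the filtered union of its finitely generated subobjects $W_{\lambda}$ and applying the exact, colimit‑preserving $j_{!}$ writes the noetherian object $X\cong j_{!}j^{*}X$ as an increasing union $\bigcup_{\lambda}j_{!}W_{\lambda}$ of subobjects, which stabilizes; hence $X=j_{!}W_{0}$ and $j^{*}X=j^{*}j_{!}W_{0}=W_{0}$ for some finitely generated $W_{0}$. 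Choosing an epimorphism $\bigoplus_{i=1}^{n}e_{H_{i}}\twoheadrightarrow j^{*}X$ with $H_{i}\in\mathcal{W}$ and applying $j_{!}$ (which fixes each $e_{H_{i}}$ and preserves epimorphisms) produces an epimorphism $\bigoplus_{i=1}^{n}e_{H_{i}}\twoheadrightarrow X$ in $\mathrm{A}(\mathcal{V})^{c}$, so that $X\in\mathrm{Serre}\langle e_{H_{i}}\mid 1\le i\le n\rangle\subseteq\mathrm{Serre}_{\otimes}\langle e_{G}\mid G\in\uparrow(\mathrm{supp}(X))\rangle$ because Serre subcategories are closed under finite sums and quotients.

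I expect the main obstacle to be the quotient‑category identification in part (1): one must take care that $i^{*}$ really is a Gabriel/Serre quotient functor, in particular that $i_{*}$ restricts to a fully faithful functor on finitely generated objects and that $\ker i^{*}$ exhausts all of $\mathrm{A}(\mathcal{V}\setminus\mathcal{U})^{c}$ — it is here that part (2) is used, so the three statements are genuinely interlinked rather than strictly sequential. A secondary subtlety, in part (3), is that restriction along an up‑closed inclusion is not among the operations formally known to preserve finite generation, so the finiteness of $j^{*}X$ has to be extracted from noetherianity by hand; and one should notice that the evident generators $e_{G_{i}}$ of $X$ as an object of $\mathrm{A}(\mathcal{V})^{c}$ need not satisfy $G_{i}\in\uparrow(\mathrm{supp}(X))$, which is precisely the reason for passing to $\mathcal{W}$.
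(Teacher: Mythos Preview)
Your proof is correct and follows the same strategy as the paper's, only with considerably more detail: the paper's proof is three sentences, pointing to ``$j_{!}$ and $i_{*}$ are extension by zero'' for (1), deducing (2) from the exact sequence, and deriving (3) from (2) applied to $\mathcal{V}\setminus\mathcal{U}=\uparrow(\mathrm{supp}(X))$ together with ``the fact that $j_{!}$ preserves tensors''. Your argument for (3) differs slightly in execution---you use exactness of $j_{!}$ and an explicit epimorphism rather than tensor-preservation, and you supply the noetherianity argument for finite generation of $j^{*}X$ that the paper leaves implicit---but the underlying route (reduce to $\mathcal{W}$ via (2), then push generators forward along $j_{!}$) is the same.
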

\begin{proof}For part (1), the result follows from the fact that both $j_{!}$ and $i_{*}$ are extension by zero.
For part (2), by the assumption we have $i^{*}X=0$ and so $j_{!}j^{*}X\cong X$. The claim (3) follows from (2) applied to $\mathcal{V}\setminus \mathcal{U}= \uparrow(\mathrm{supp}(X))$ and the fact that $j_{!}$ preserves tensors.
\end{proof}

\begin{lem}\label{lem4.13} Let $i:\mathcal{U}\rightarrow \mathcal{V}$ be the inclusion of a down-closed subcategory  and $j:\mathcal{V}\setminus~\mathcal{U}\rightarrow \mathcal{V}$ be the complement. Suppose $\mathcal{V}$ is noetherian and $X\in \mathrm{A}(\mathcal{U})^{c}$, then we have: $$i_{!}\mathrm{Serre}_{\otimes}\langle X\rangle\subseteq \mathrm{Serre}_{\otimes}\langle \{i_{!}X\}\cup \{e_{G}|~G\in \mathcal{V}\setminus~\mathcal{U}\}\rangle.$$
\end{lem}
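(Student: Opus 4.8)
The plan is to deduce the claim from the analogous statement for the right adjoint $i_{*}$ rather than $i_{!}$: because $\mathcal{U}$ is down-closed in $\mathcal{V}$, the functor $i_{*}$ is extension by zero, hence \emph{exact} and compatible with tensor products, whereas $i_{!}$ is only right exact and merely oplax monoidal. Throughout, write $\mathcal{D}:=\mathrm{Serre}_{\otimes}\langle \{i_{!}X\}\cup\{e_{G}\mid G\in\mathcal{V}\setminus\mathcal{U}\}\rangle\subseteq\mathrm{A}(\mathcal{V})^{c}$ for the target Serre ideal.

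First I would establish a bridge: for every $Z\in\mathrm{A}(\mathcal{U})^{c}$ one has $i_{!}Z\in\mathcal{D}$ if and only if $i_{*}Z\in\mathcal{D}$. Indeed, applying Lemma \ref{lem4.12}(1) to the finitely generated object $i_{!}Z$ and using $i^{*}i_{!}Z\cong Z$ yields a short exact sequence $j_{!}j^{*}i_{!}Z\rightarrowtail i_{!}Z\twoheadrightarrow i_{*}Z$ in $\mathrm{A}(\mathcal{V})^{c}$; in particular $i_{*}Z$, being a quotient of $i_{!}Z$, is finitely generated. Since $j_{!}$ is extension by zero, $j_{!}j^{*}i_{!}Z$ is supported on $\mathcal{V}\setminus\mathcal{U}$, and as the latter is up-closed in $\mathcal{V}$, Lemma \ref{lem4.12}(3) gives $j_{!}j^{*}i_{!}Z\in\mathrm{Serre}_{\otimes}\langle e_{G}\mid G\in\mathcal{V}\setminus\mathcal{U}\rangle\subseteq\mathcal{D}$. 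As $\mathcal{D}$ is a Serre subcategory (closed under subobjects, quotients and extensions), the short exact sequence shows the two membership conditions are equivalent. Taking $Z=X$ and recalling that $i_{!}X\in\mathcal{D}$ by definition, we obtain $i_{*}X\in\mathcal{D}$.

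Next I would transfer the problem to $\mathrm{A}(\mathcal{U})^{c}$. Put $\mathcal{E}:=\{Z\in\mathrm{A}(\mathcal{U})^{c}\mid i_{*}Z\in\mathcal{D}\}$. Exactness of $i_{*}$ on $\mathrm{A}(\mathcal{U})^{c}$, with values in $\mathrm{A}(\mathcal{V})^{c}$ as noted above, together with the Serre property of $\mathcal{D}$, shows $\mathcal{E}$ is closed under subobjects, quotients and extensions; and the isomorphism $i_{*}(Z\otimes W)\cong i_{*}Z\otimes i_{*}W$ (with $i_{*}W$ again finitely generated, being a quotient of $i_{!}W$) together with the ideal property of $\mathcal{D}$ shows $\mathcal{E}$ is a tensor ideal. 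Hence $\mathcal{E}$ is a Serre tensor ideal of $\mathrm{A}(\mathcal{U})^{c}$ containing $X$, so $\mathrm{Serre}_{\otimes}\langle X\rangle\subseteq\mathcal{E}$. Then for any $Z\in\mathrm{Serre}_{\otimes}\langle X\rangle$ we have $i_{*}Z\in\mathcal{D}$, whence $i_{!}Z\in\mathcal{D}$ by the bridge, which is exactly the asserted inclusion $i_{!}\mathrm{Serre}_{\otimes}\langle X\rangle\subseteq\mathcal{D}$.

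I do not anticipate a genuine difficulty; the step that actually uses the hypotheses is the innocuous-looking claim that $i_{*}$ restricts to an exact, tensor-preserving functor $\mathrm{A}(\mathcal{U})^{c}\to\mathrm{A}(\mathcal{V})^{c}$ — this is where down-closedness of $\mathcal{U}$ (so that $i_{*}$ is extension by zero) and noetherianity of $\mathcal{V}$ (so that $i_{*}Z$, a quotient of the finitely generated $i_{!}Z$, is again finitely generated) both enter, and it is precisely what makes $\mathcal{E}$ a legitimate Serre ideal of $\mathrm{A}(\mathcal{U})^{c}$.
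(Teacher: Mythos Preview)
Your proof is correct and is, if anything, cleaner than the paper's. The paper works directly with $i_{!}$: it uses the oplax comparison map $h:i_{!}(X\otimes Y)\to i_{!}X\otimes i_{!}Y$, shows that $\mathrm{Ker}(h)$ and $\mathrm{Coker}(h)$ are supported on $\mathcal{V}\setminus\mathcal{U}$ (hence lie in $\mathcal{D}$ via Lemma~\ref{lem4.12}), and then separately treats subobjects, quotients and extensions of $X$ by the same ``error terms supported on the complement'' argument applied to the failure of $i_{!}$ to be left exact. You instead pass through the bridge $j_{!}j^{*}i_{!}Z\rightarrowtail i_{!}Z\twoheadrightarrow i_{*}Z$ to replace $i_{!}$ by $i_{*}$, which in the down-closed situation is extension by zero and hence genuinely exact and tensor-preserving; this makes the preimage $\mathcal{E}=(i_{*})^{-1}(\mathcal{D})$ a Serre ideal on the nose, with no case analysis. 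Both arguments ultimately rest on Lemma~\ref{lem4.12} and the fact that any object supported on the up-closed complement lies in $\mathrm{Serre}_{\otimes}\langle e_{G}\mid G\in\mathcal{V}\setminus\mathcal{U}\rangle$; yours packages this once in the bridge rather than invoking it repeatedly, at the small cost of needing to observe that $i_{*}Z$ remains finitely generated (as a quotient of $i_{!}Z$).
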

\begin{proof} Firstly we claim that $i_{!}(X\otimes Y)\in \mathrm{Serre}_{\otimes}\langle \{i_{!}X\}\cup \{e_{G}|~G\in \mathcal{V}\setminus~\mathcal{U}\}\rangle$ for any $Y\in \mathrm{A}(\mathcal{U})^{c}$.  Consider the following diagram: $$\xymatrix{
                & i_{!}(X\otimes Y) \ar@{->>}[dr] \ar[rr]^{h} & &i_{!}(X)\otimes i_{!}(Y) \ar@{->>}[rr] & & \mathrm{Coker}(h)         \\
 \mathrm{Ker}(h) \ar@{>->}[ur] & &     \mathrm{Im}(h)\ar@{>->}[ur]       }
$$
where the map $h$ is induced by the oplax monoidal structure on $i_{!}$. Since $i^{*}$ is strong monoidal and exact, we have $i^{*}\mathrm{Coker}(h)=0$ and so $\mathrm{Coker}(h)\in \mathrm{A}(\mathcal{U})^{c}$ by Lemma \ref{lem4.12}. Thus $$\mathrm{Coker}(h)\cong j_{!}j^{*}\mathrm{Coker}(h)\in \mathrm{Serre}_{\otimes}\langle \{e_{G}|~G\in \mathcal{V}\setminus~\mathcal{U}\}\rangle$$ by the same lemma again. Similarly $i^{*}\mathrm{Ker}(h)=0$ and so $$\mathrm{Ker}(h)\cong j_{!}j^{*}\mathrm{Ker}(h)\in \mathrm{Serre}_{\otimes}\langle \{e_{G}|~G\in \mathcal{V}\setminus~\mathcal{U}\}\rangle.$$
Thus \begin{align*}
  i_{!}(X\otimes Y) &\in \mathrm{Serre}_{\otimes}\langle \mathrm{Ker}(h), \mathrm{Im}(h), \mathrm{Coker}(h) \rangle\\
  &\subseteq \mathrm{Serre}_{\otimes}\langle \mathrm{Ker}(h), i_{!}(X)\otimes i_{!}(Y) \rangle\\
  &\subseteq \mathrm{Serre}_{\otimes}\langle  \{i_{!}X\}\cup \{e_{G}|~G\in \mathcal{V}\setminus~\mathcal{U}\} \rangle,
\end{align*}
as claimed.

Now suppose $Y\rightarrowtail X\twoheadrightarrow Z$ is short exact. Then $$ i_{!}(Z)\in \mathrm{Serre}_{\otimes}\langle  i_{!}X \rangle\subseteq \mathrm{Serre}_{\otimes}\langle  \{i_{!}X\}\cup \{e_{G}|~G\in \mathcal{V}\setminus~\mathcal{U}\} \rangle$$ since $i_{!}$ is right exact. Apply $i_{!}$ to the injection $f:Y\rightarrowtail X$, we have $i^{*}\mathrm{Ker}(i_{!}f)=0$ and so $\mathrm{Ker}(i_{!}f)\in \mathrm{Serre}_{\otimes}\langle \{e_{G}|~G\in \mathcal{V}\setminus~\mathcal{U}\rangle$ as above. Then $$i_{!}(Y)\in \mathrm{Serre}_{\otimes}\langle \mathrm{Ker}(i_{!}f), \mathrm{Im}(i_{!}f)\rangle \subseteq \mathrm{Serre}_{\otimes}\langle  \{i_{!}X\}\cup \{e_{G}|~G\in \mathcal{V}\setminus~\mathcal{U}\} \rangle.$$

At last, for any short exact sequence $Y\rightarrowtail W\twoheadrightarrow Z$ in $\mathrm{A}(\mathcal{U})^{c}$ with $i_{!}(Y), i_{!}(Z)\in  \mathrm{Serre}_{\otimes}\langle  \{i_{!}X\}\cup \{e_{G}|~G\in \mathcal{V}\setminus~\mathcal{U}\} \rangle$, then \begin{align*}
  i_{!}(W) &\in \mathrm{Serre}_{\otimes}\langle  \mathrm{Im}(g), i_{!}(Z) \rangle\\
  &\subseteq \mathrm{Serre}_{\otimes}\langle i_{!}(Y), i_{!}(Z) \rangle\\
  &\subseteq \mathrm{Serre}_{\otimes}\langle  \{i_{!}X\}\cup \{e_{G}|~G\in \mathcal{V}\setminus~\mathcal{U}\} \rangle
\end{align*} where $g:Y\rightarrow W$ is the monomorphism. This concludes the proof.
\end{proof}

\begin{df} \label{df 4.13} Let $\mathcal{V}$ be noetherian and $X\in \mathrm{A}(\mathcal{V})^{c}$. Define a Serre ideal $\mathrm{Serre}_{\otimes,n}^{+}\langle X\rangle\subseteq \mathrm{A}(\mathcal{V})^{c}$ as $$\mathrm{Serre}_{\otimes,n}^{+}\langle X\rangle:=\mathrm{Serre}_{\otimes}\langle \{X\}\cup \{e_{G}| G\in (\uparrow(\mathrm{supp}(X))_{>n}\}\rangle$$ and set $$\mathrm{Serre}_{\otimes}^{+}\langle X\rangle:=\bigcap_{n>0}\mathrm{Serre}_{\otimes,n}^{+}\langle X\rangle.$$
\end{df}

Compare the following result to \cite[Proposition 5.8]{BBP+25b}.
\begin{thm}\label{thm 4.15} Let $\mathcal{V}$ be noetherian and $X, Y\in \mathrm{A}(\mathcal{V})^{c}$. If $\mathrm{supp}(X)\subseteq \mathrm{supp}(Y)$, then $X\in \mathrm{Serre}_{\otimes}^{+}\langle Y\rangle$.
\end{thm}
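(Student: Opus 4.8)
The plan is to reduce the statement about a general noetherian family $\mathcal{V}$ to the essentially finite case, which is already settled by Proposition \ref{prop4.9}, by truncating at all orders $n$ and then intersecting. First I would fix $n>0$ and set $\mathcal{U}:=\mathcal{V}_{\leq n}$, the down-closed subcategory of groups of order at most $n$, with inclusion $i:\mathcal{U}\rightarrow\mathcal{V}$ and complement $j:\mathcal{V}\setminus\mathcal{U}=\mathcal{V}_{>n}\rightarrow\mathcal{V}$. Since $\mathcal{U}$ is essentially finite, Proposition \ref{prop4.9} applies inside $\mathrm{A}(\mathcal{U})^{c}$: from $\mathrm{supp}(i^{*}X)\subseteq\mathrm{supp}(i^{*}Y)$ (which is immediate from $\mathrm{supp}(X)\subseteq\mathrm{supp}(Y)$ by restriction) we get $i^{*}X\in\mathrm{Serre}_{\otimes}\langle i^{*}Y\rangle$ in $\mathrm{A}(\mathcal{U})^{c}$.

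Next I would transport this containment back up along $i_{!}$ using Lemma \ref{lem4.13}, which gives
$$i_{!}i^{*}X\in i_{!}\mathrm{Serre}_{\otimes}\langle i^{*}Y\rangle\subseteq\mathrm{Serre}_{\otimes}\langle\{i_{!}i^{*}Y\}\cup\{e_{G}\mid G\in\mathcal{V}_{>n}\}\rangle.$$
The two ingredients still needed are: (a) a way to pass from $i_{!}i^{*}X$ to $X$ itself, and (b) a way to replace the full set $\{e_{G}\mid G\in\mathcal{V}_{>n}\}$ by the smaller set $\{e_{G}\mid G\in(\uparrow\mathrm{supp}(X))_{>n}\}$ appearing in Definition \ref{df 4.13}, together with the generator $Y$ in place of $i_{!}i^{*}Y$. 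For (a): by Lemma \ref{lem4.12}(1) there is a short exact sequence $j_{!}j^{*}X\rightarrowtail X\twoheadrightarrow i_{*}i^{*}X$, and since $i_{*}i^{*}X$ is supported on $\mathcal{U}=\mathcal{V}_{\leq n}$ while $i_{!}i^{*}X$ is the same functor regarded in $\mathrm{A}(\mathcal{V})$ via extension by zero, $X$ differs from $i_{!}i^{*}X$ only by a piece $j_{!}j^{*}X$ supported on $\mathcal{V}_{>n}$; more precisely $j_{!}j^{*}X\in\mathrm{Serre}_{\otimes}\langle e_{G}\mid G\in\uparrow(\mathrm{supp}(X))_{>n}\rangle$ by Lemma \ref{lem4.12}(3) applied inside $\mathrm{A}(\mathcal{V}_{>n})^{c}$ and the fact that $j_{!}$ preserves tensors, while $i_{*}i^{*}X$ and $i_{!}i^{*}X$ generate the same Serre ideal. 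Hence $X\in\mathrm{Serre}_{\otimes}\langle\{i_{!}i^{*}X\}\cup\{e_{G}\mid G\in\uparrow(\mathrm{supp}(X))_{>n}\}\rangle$. For (b): again $Y$ and $i_{!}i^{*}Y$ generate Serre ideals differing only by objects of the form $e_{G}$ with $G\in\uparrow(\mathrm{supp}(Y))_{>n}=\uparrow(\mathrm{supp}(X))_{>n}\cup\dots$; here one uses $\mathrm{supp}(X)\subseteq\mathrm{supp}(Y)$ so that every such $e_{G}$ lies in $\mathrm{Serre}_{\otimes}\langle e_{G'}\mid G'\in\uparrow(\mathrm{supp}(Y))_{>n}\rangle$, and I should be careful that the index set matches the one in $\mathrm{Serre}_{\otimes,n}^{+}\langle Y\rangle$, which is built from $\uparrow(\mathrm{supp}(Y))_{>n}$. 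Combining, $X\in\mathrm{Serre}_{\otimes}\langle\{Y\}\cup\{e_{G}\mid G\in\uparrow(\mathrm{supp}(Y))_{>n}\}\rangle=\mathrm{Serre}_{\otimes,n}^{+}\langle Y\rangle$.

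Since $n$ was arbitrary, $X\in\bigcap_{n>0}\mathrm{Serre}_{\otimes,n}^{+}\langle Y\rangle=\mathrm{Serre}_{\otimes}^{+}\langle Y\rangle$, which is the claim. The main obstacle I anticipate is the bookkeeping in step (b): keeping the generating set of extra $e_{G}$'s exactly aligned with the one in Definition \ref{df 4.13} (using $\uparrow\mathrm{supp}(Y)$ rather than $\uparrow\mathrm{supp}(X)$ or all of $\mathcal{V}_{>n}$), and checking that replacing $i_{!}i^{*}Y$ by $Y$ — and $i_{!}i^{*}X$ by $X$ — genuinely only costs objects $e_{G}$ with $G$ of order $>n$ in the relevant upward closure. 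This is where Lemma \ref{lem4.12}(3) and the exactness/tensor-preservation properties of $i_{!},j_{!}$ listed in the preliminaries do the real work, and I would want to state the comparison $\mathrm{Serre}_{\otimes}\langle i_{!}i^{*}Z\rangle$ versus $\mathrm{Serre}_{\otimes}\langle Z\rangle$ as a small standalone observation before assembling the proof.
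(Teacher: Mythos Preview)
Your overall plan—truncate to the essentially finite $\mathcal{V}_{\leq n}$, apply Proposition~\ref{prop4.9} there, push back via Lemma~\ref{lem4.13}, and intersect over $n$—is precisely the paper's. The gap is the one you yourself flag in step~(b): Lemma~\ref{lem4.13} as stated only gives $i_{!}i^{*}X\in\mathrm{Serre}_{\otimes}\langle\{i_{!}i^{*}Y\}\cup\{e_{G}\mid G\in\mathcal{V}_{>n}\}\rangle$, with the extra generators running over \emph{all} of $\mathcal{V}_{>n}$, whereas $\mathrm{Serre}_{\otimes,n}^{+}\langle Y\rangle$ only permits $e_{G}$ with $G\in(\uparrow\mathrm{supp}(Y))_{>n}$. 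Closing this directly would require a sharpened form of Lemma~\ref{lem4.13} tracking the supports of all the error terms $\mathrm{Ker}(h)$, $\mathrm{Coker}(h)$, $\mathrm{Ker}(i_{!}f)$ and showing they lie in $(\uparrow\mathrm{supp}(Y))_{>n}$; you gesture at Lemma~\ref{lem4.12}(3) but do not actually carry this out. (A secondary imprecision: in step~(a) you assert that $i_{*}i^{*}X$ and $i_{!}i^{*}X$ generate the same Serre ideal, but they agree only modulo objects supported on $\mathcal{V}_{>n}$; the paper works instead with the counit $i_{!}i^{*}X\to X$ directly and controls its image, kernel and cokernel.)

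The paper avoids this bookkeeping by a preliminary reduction you are missing: it first restricts along the inclusion $j:\uparrow\mathrm{supp}(Y)\hookrightarrow\mathcal{V}$ of the upward-closed subcategory generated by $\mathrm{supp}(Y)$. Since both $X$ and $Y$ are supported there, $j_{!}j^{*}X\cong X$ and $j_{!}j^{*}Y\cong Y$. Inside $\uparrow\mathrm{supp}(Y)$ one has $\uparrow\mathrm{supp}(j^{*}Y)$ equal to the whole ambient family by construction, so the truncation argument gives $j^{*}X\in\mathrm{Serre}_{\otimes,n}^{+}\langle j^{*}Y\rangle$ with the index sets $\mathcal{V}_{>n}$ and $(\uparrow\mathrm{supp}(Y))_{>n}$ coinciding trivially. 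One then pushes forward along $j_{!}$, which preserves tensors because its source is upward closed, to obtain $X\in\mathrm{Serre}_{\otimes,n}^{+}\langle Y\rangle$. This two-step reduction—first pass to $\uparrow\mathrm{supp}(Y)$, then truncate—is the manoeuvre your proposal is missing.
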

\begin{proof} We first prove the claim in the special case $\uparrow(\mathrm{supp}(Y))=\mathcal{V}$. Since $X, Y\in \mathrm{A}(\mathcal{V})^{c}$, there exists some finite set $S\subseteq \mathcal{V}$ such that $X,Y\in \mathrm{Serre}_{\otimes}\langle e_{G}|~G\in S\rangle$. Let $n\geq \mathrm{max}\{|G|~|G\in S\}$ and write $i:\mathcal{V}_{\leq n}\rightarrow \mathcal{V}$ for the inclusion of the downwards-closed subcategory. Consider the following diagram:$$\xymatrix{
                & i_{!}i^{*}X \ar@{->>}[dr] \ar[rr]^{g} & &X  \ar@{->>}[rr] & & \mathrm{Coker}(g)        \\
 & &      \mathrm{Im}(g)\ar@{>->}[ur]       }
$$where g is the counit map on $X$, we have $\mathrm{supp}(\mathrm{Coker}(g))\subset \mathcal{V}_{>n}$. Similarly we have $\mathrm{supp}(\mathrm{Ker}(f))\subset \mathcal{V}_{>n}$ where f is the counit map on $Y$. Since $\mathrm{supp}(i^{*}X)\subseteq \mathrm{supp}(i^{*}X)$, we have $i^{*}X\in \mathrm{Serre}_{\otimes}\langle i^{*}Y\rangle$ by Proposition \ref{prop4.9}. Then \begin{align*}
  X &\in \mathrm{Serre}_{\otimes}\langle  \mathrm{Im}(g), \mathrm{Coker}(g) \rangle\\
  &\subseteq \mathrm{Serre}_{\otimes}\langle  i_{!}i^{*}(X), \{e_{G}|~G\in \mathcal{V}_{>n}\} \rangle \quad\quad\quad\quad\quad \mathrm{by~Lemma~\ref{lem4.12}}\\
  &\subseteq \mathrm{Serre}_{\otimes}\langle  i_{!}i^{*}(Y), \{e_{G}|~G\in \mathcal{V}_{>n}\} \rangle \quad\quad\quad\quad\quad \mathrm{by~Lemma~\ref{lem4.13}}\\
  &\subseteq \mathrm{Serre}_{\otimes}\langle  \mathrm{Im}(f),\mathrm{Ker}(f), \{e_{G}|~G\in \mathcal{V}_{>n}\} \rangle \\
  &\subseteq \mathrm{Serre}_{\otimes}\langle  Y, \{e_{G}|~G\in \mathcal{V}_{>n}\} \rangle \\
  &= \mathrm{Serre}_{\otimes}\langle  Y, \{e_{G}|~G\in \uparrow(\mathrm{supp}(Y))_{>n}\} \rangle .
\end{align*}
Since $n$ could be taken arbitrarily large, we have that $X\in \mathrm{Serre}_{\otimes}^{+}\langle Y\rangle$, as claimed.

Now write $j:\uparrow \mathrm{supp}(Y)\rightarrow \mathcal{V}$ for the inclusion of the upwards-closed subcategory. Then $j_{!}j^{*}\rightarrow \mathrm{id}$ is an isomorphism on X and Y and so $$j_{!}\mathrm{Serre}_{\otimes,n}^{+}\langle j^{*}X\rangle\subseteq \mathrm{Serre}_{\otimes,n}^{+}\langle j_{!}j^{*}X\rangle=\mathrm{Serre}_{\otimes,n}^{+}\langle X\rangle$$ holds for any $n\geq 0$. By the above claim and the fact that $\mathrm{supp}(j^{*}X)\subseteq \mathrm{supp}(j^{*}Y)$, we have $$X\cong j_{!}j^{*}(X)\in j_{!}\mathrm{Serre}_{\otimes,n}^{+}\langle j^{*}Y\rangle\subseteq \mathrm{Serre}_{\otimes,n}^{+}\langle Y\rangle.$$
The result follows by taking the intersection over varying $n$.
\end{proof}

 \section{\bf Prime spectrum of VI-modules}
In this section, we determine the spectrum of prime Serre ideals for global representations over a noetherian and $\mathbb{N}$-stable family $\mathcal{U}$. Primary examples include the family of elementary abelian p-groups and the family of cyclic p-groups.

\begin{df}\label{df 5.1}
A subcategory $\mathcal{U} \subseteq \mathcal{G}$ is called $\mathbb{N}$-stable if it satisfies the following:
\begin{enumerate}
    \item  The set $\pi_{0}(\mathcal{U})$ of isomorphism classes of objects in $\mathcal{U}$ is in bijection with $\mathbb{N}$; we write these classes as $\{[G_n]\}_{n \in \mathbb{N}}$.
    \item $\pi_{0}(\mathcal{U})$ is totally ordered by the existence of epimorphisms:
          for any distinct $m, n \in \mathbb{N}$, exactly one of the following holds:
          \begin{itemize}
              \item there exists an epimorphism $G_m \twoheadrightarrow G_n$,
              \item there exists an epimorphism $G_n \twoheadrightarrow G_m$.
          \end{itemize}
          Moreover, the direction is consistent with the natural order on indices: if $m \le n$, then there exists an epimorphism $G_n \twoheadrightarrow G_m$.
    \item Every finitely generated object $X \in \mathrm{A}(\mathcal{U})$ is eventually torsion-free: there exists $r \in \mathbb{N}$ such that for every epimorphism $\alpha: G_m \twoheadrightarrow G_n$ with $n > r$, the induced map $\alpha^{*}: X(G_n) \rightarrow X(G_m)$ is injective.
\end{enumerate}
\end{df}

\begin{ex}Let $\mathcal{U}$ be the family of elementary abelian p-groups or the family of cyclic p-groups. Then $\mathcal{U}$ is $\mathbb{N}$-stable and noetherian by \cite[Theorem B]{PS22}.
\end{ex}

\begin{nota}For simplicity, we write $e_{n,V}$ for $e_{G_{n},V}$,~$X(n)$ for $X(G_{n})$,~$P_{n}$ for $P_{G_{n}}$ and so on.
\end{nota}

\begin{df}Let $\mathcal{U}$ be $\mathbb{N}$-stable and noetherian. Define a subcategory $P_{\infty}\subseteq \mathrm{A}(\mathcal{U})^{c}$ as $$P_{\infty}:=\{X\in \mathrm{A}(\mathcal{U})^{c}|~\mathrm{supp}(X)\subseteq\mathbb{N}~is~finite\}.$$
\end{df}

\begin{lem}\label{lem 5.5}Let $\mathcal{U}$ be $\mathbb{N}$-stable and noetherian. Then  $$P_{\infty}=\mathrm{Serre}_{\otimes}\langle\chi_{i,k}|~i\in \mathbb{N}\rangle$$
is a minimal prime Serre ideal.
\end{lem}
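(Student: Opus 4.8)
The plan is to establish three things: that $P_{\infty}$ equals the stated Serre ideal, that $P_{\infty}$ is a prime Serre ideal, and that it is minimal among prime Serre ideals.

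\textbf{Step 1: $P_{\infty}=\mathrm{Serre}_{\otimes}\langle\chi_{i,k}\mid i\in\mathbb{N}\rangle$.} The inclusion $\supseteq$ is immediate since each $\chi_{i,k}$ has support $\{[G_i]\}$, which is finite, and $P_{\infty}$ is closed under the Serre-ideal operations (short exact sequences preserve support by Lemma \ref{lem4.6}(3), direct sums by (2), and tensoring with an arbitrary object can only shrink support by (4)); one checks directly that the collection of objects with finite support is a Serre ideal. For $\subseteq$, take $X\in\mathrm{A}(\mathcal{U})^{c}$ with $\mathrm{supp}(X)=\{[G_{n_1}],\dots,[G_{n_r}]\}$ finite. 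Because $\mathcal{U}$ is $\mathbb{N}$-stable, $\mathrm{supp}(X)$ is contained in $\mathcal{U}_{\le N}$ for $N$ large, which is an essentially finite downwards-closed subcategory; applying Proposition \ref{prop4.9} inside $\mathrm{A}(\mathcal{U}_{\le N})^{c}$ gives $i^{*}X\in\mathrm{Serre}_{\otimes}\langle\chi_{G,k}\mid G\in\mathrm{supp}(X)\rangle$ there, and then pushing forward along the extension-by-zero functor $i_{*}$ (available since $\mathrm{supp}(X)$ is down-closed inside itself, or rather using $j_{!}j^{*}X\cong X$ from Lemma \ref{lem4.12}(2) for the up-closed complement) transports this expression back to $\mathrm{A}(\mathcal{U})^{c}$. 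The point is that $\chi_{G,k}$ is the same object whether computed in $\mathcal{U}_{\le N}$ or $\mathcal{U}$, so the Serre-ideal membership survives.

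\textbf{Step 2: $P_{\infty}$ is a prime Serre ideal.} It is proper since $\mathrm{supp}(\mathbf{1})=\pi_{0}(\mathcal{U})\cong\mathbb{N}$ is infinite, so $\mathbf{1}\notin P_{\infty}$. For primality: if $X\otimes Y\in P_{\infty}$ then $\mathrm{supp}(X)\cap\mathrm{supp}(Y)=\mathrm{supp}(X\otimes Y)$ is finite by Lemma \ref{lem4.6}(4). I must show one of $\mathrm{supp}(X),\mathrm{supp}(Y)$ is finite. Here I use condition (2) of $\mathbb{N}$-stability: the support of any object, being a subset of the totally ordered set $\pi_{0}(\mathcal{U})\cong\mathbb{N}$, is either finite or cofinite-\emph{looking}—more precisely, if both $\mathrm{supp}(X)$ and $\mathrm{supp}(Y)$ were infinite they would both be infinite subsets of $\mathbb{N}$, and I need their intersection infinite. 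This is \emph{not} automatic for arbitrary infinite subsets of $\mathbb{N}$, so this is the crux; I expect the resolution to use the structure of finitely generated objects more strongly—specifically, that the support of a finitely generated object is \emph{eventually everything}: if $X$ is finitely generated with infinite support, then condition (3) ($X$ eventually torsion-free) forces $X(G_n)\ne 0$ for all sufficiently large $n$, because once $X(G_{n_0})\ne 0$ for some large $n_0$ in the torsion-free range, every epimorphism $G_m\twoheadrightarrow G_{n_0}$ with $m\ge n_0$ gives an injection $X(G_{n_0})\hookrightarrow X(G_m)$, so $X(G_m)\ne 0$. Thus $\mathrm{supp}(X)$ is either finite or cofinite, and two cofinite subsets of $\mathbb{N}$ have cofinite (in particular infinite) intersection—so if neither $X$ nor $Y$ lies in $P_{\infty}$, both supports are cofinite, hence $\mathrm{supp}(X\otimes Y)$ is cofinite, contradicting finiteness. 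That establishes primality.

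\textbf{Step 3: minimality.} Let $Q$ be any prime Serre ideal with $Q\subseteq P_{\infty}$; I want $Q=P_{\infty}$. By Step 1 it suffices to show $\chi_{i,k}\in Q$ for every $i$. Fix $i$ and consider $\chi_{i,k}$. The idea is to exhibit a tensor-product factorization landing in $Q$ while forcing $\chi_{i,k}$ itself into $Q$; concretely, for $j\ne i$ the object $\chi_{i,k}\otimes\chi_{j,k}$ has support $\{[G_i]\}\cap\{[G_j]\}=\emptyset$, so it is zero, hence in $Q$ trivially, but that only yields $\chi_{j,k}\in Q$ if we already knew $\chi_{i,k}\notin Q$—not directly useful. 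A cleaner route: since $Q\subseteq P_{\infty}$ and $Q$ is prime, and $P_{\infty}$ is generated by the $\chi_{i,k}$, one shows any \emph{nonzero} $X\in Q$ has finite nonempty support, pick $[G_i]\in\mathrm{supp}(X)$; then (as in the proof of Proposition \ref{prop4.9}) $\chi_{i,k}\otimes X\cong\chi_{i,X(i)}\in\mathrm{Serre}_{\otimes}\langle X\rangle\subseteq Q$, and by Lemma \ref{lem4.7}(2) this forces $\chi_{i,k}\in Q$. If $Q=\{0\}$, then $Q$ is a prime Serre ideal consisting only of the zero object, but $\chi_{i,k}\otimes\chi_{j,k}=0\in Q$ for $i\ne j$ while neither factor is zero, contradicting primality of $Q$—so $Q$ must contain some nonzero object, and by the preceding it then contains some $\chi_{i,k}$, and finally I bootstrap: having one $\chi_{i,k}\in Q$, for any $j$ the sequence of Lemma \ref{lem4.8} or a direct support computation together with primality of $Q$ (applied to suitable tensor factorizations, using that $Q$ is a proper ideal properly containing no zero divisors outside itself) yields all $\chi_{j,k}\in Q$; I expect the slickest finish is simply to note that $Q\neq\{0\}$ plus $Q\subseteq P_{\infty}$ already pins down $Q=P_{\infty}$ once we know $P_{\infty}$ has no prime Serre ideals strictly between $\{0\}$ and itself, which follows because every nonzero Serre ideal inside $P_{\infty}$ contains some $\chi_{i,k}$ and primality propagates this to all of them. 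The main obstacle throughout is Step 2's input (3): correctly invoking eventual torsion-freeness to conclude supports of finitely generated objects are finite-or-cofinite.
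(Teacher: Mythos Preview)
Your Step 2 is correct and matches the paper. Your Step 1 takes a different (but valid) route—reducing to the essentially finite subcategory $\mathcal{U}_{\le N}$ and invoking Proposition~\ref{prop4.9}—whereas the paper argues directly by induction on $|\mathrm{supp}(X)|$ via the short exact sequence $\Gamma_m X\rightarrowtail X\twoheadrightarrow\chi_{m,X(m)}$. (Your parenthetical about $j_!j^*X\cong X$ is misplaced, though: with $\mathrm{supp}(X)\subseteq\mathcal{U}_{\le N}$ one has $j^*X=0$, so that isomorphism is vacuous. The relevant identification is $i_*i^*X\cong X$, and one must check that $i_*$ carries Serre-ideal membership forward, which follows since $i_*$ is exact and tensor-preserving for a downwards-closed inclusion.)

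Step 3, however, has a real gap. The observation $\chi_{i,k}\otimes\chi_{j,k}=0$ for $i\ne j$ shows that at most one $\chi_{j_0,k}$ can lie outside $Q$, but your ``bootstrap'' never rules out this remaining possibility, and the references to Lemma~\ref{lem4.8} and ``suitable tensor factorizations'' are not turned into an argument. What is missing is exactly the paper's key move: for each $i$, exhibit a finitely generated object $\gamma_i$ with \emph{cofinite} support and $\gamma_i(i)=0$ (the paper takes $\gamma_i$ to be the constant functor $k$ with a single zero inserted at degree $i$; the kernel $F$ in Lemma~\ref{lem4.8} for $G=G_i$ would serve equally well). Then $\gamma_i\otimes\chi_{i,k}=0\in Q$ while $\gamma_i\notin P_\infty\supseteq Q$, so primality forces $\chi_{i,k}\in Q$ for every $i$ directly—no bootstrap is needed. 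Without constructing such an object you cannot exclude the candidate $Q=\{X\in P_\infty: j_0\notin\mathrm{supp}(X)\}$, and showing that this candidate fails to be prime again requires precisely an object like $\gamma_{j_0}$.
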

\begin{proof}By Lemma \ref{lem4.6}, $P_{\infty}$ is a Serre ideal. Suppose $X,Y\in \mathrm{A}(\mathcal{U})^{c}$ with $X\otimes Y\in P_{\infty}$. If $\mathrm{supp}(X)$ and $\mathrm{supp}(Y)$ are not finite, then by the condition that $X$ and $Y$ are eventually torsion free, there exists $r\in \mathbb{N}$ such that $\mathrm{dim}_{k}(X(n))>0, \mathrm{dim}_{k}(Y(n))>0$ for all $n\geq r$. This would imply that $(X\otimes Y)(n)\neq 0$ for sufficiently large $n$, contradicting $X \otimes Y \in P_{\infty}$. Thus $P_{\infty}$ is prime.

The containment $\supseteq$ is obvious. For the other direction, suppose $X$ is a finitely generated object supported on $[m,n]$. Then we have the following diagram:$$\xymatrix{
  \Gamma_{m}X:= \ar@{>->}[d]  & \cdots \ar[d] \ar[r] & 0 \ar[d] \ar[r] & 0 \ar[d] \ar[r] & X(m+1) \ar[d]^{\simeq} \ar[r] & \cdots \ar[d]^{\simeq}\ar[r]& X(n) \ar[d]^{\simeq}\ar[r] & 0\ar[d]\ar[r]&\cdots\ar[d]\\
  X:= \ar@{->>}[d]  & \cdots \ar[d] \ar[r] & 0 \ar[d] \ar[r] & X(m) \ar[d]^{\simeq} \ar[r] & X(m+1) \ar[d] \ar[r] & \cdots \ar[d]\ar[r]& X(n)\ar[d]\ar[r] & 0\ar[d]\ar[r]&\cdots\ar[d] \\
  \chi_{m,X(m)}:=  & \cdots \ar[r] & 0 \ar[r] & X(m) \ar[r] & 0 \ar[r] & \cdots \ar[r] & 0 \ar[r] & 0\ar[r]&\cdots}$$
  which implies that $X\in \mathrm{Serre}_{\otimes}\langle\Gamma_{m}X, \chi_{m,X(m)}\rangle\subseteq \mathrm{Serre}_{\otimes}\langle\Gamma_{m}X, \chi_{m,k}\rangle.$ Inductively we have $X\in \mathrm{Serre}_{\otimes}\langle\chi_{i,k}|~m\leq i\leq n\rangle\subseteq \mathrm{Serre}_{\otimes}\langle\chi_{i,k}|~i\in \mathbb{N}\rangle.$

 Finally, suppose $P$ is any prime Serre ideal in $\mathrm{A}(\mathcal{U})^{c}$ such that $P\subseteq P_{\infty}$. We claim that $P=P_{\infty}$. Denote the following object by $\gamma_{i}$: $$\xymatrix@C=0.5cm{
  k \ar[r]^{=} & \cdots \ar[r]^{=} &k  \ar[r] & 0 \ar[r] & k \ar[r]^{=} &\cdots  }$$
  where 0 is in degree i. It is easy to show $\gamma_{i}$ is finitely generated. Then  $\chi_{i,k}\in P$ for each $i\in \mathbb{N}$ follows from the fact that $\gamma_{i}\notin  P_{\infty}$ and $\gamma_{i}\otimes \chi_{i,k}=0\in P$. Thus $P_{\infty}=\mathrm{Serre}_{\otimes}\langle\chi_{i,k}|~i\in \mathbb{N}\rangle\subseteq P$ and so $P=P_{\infty}$, as claimed.
\end{proof}

\begin{lem}\label{lem 5.6}Let $\mathcal{U}$ be  noetherian and $\mathbb{N}$-stable. Suppose $X,Y\in \mathrm{A}(\mathcal{U})^{c}$ with $\mathrm{supp}(X)\subseteq \mathrm{supp}(Y)$, then $X\in \mathrm{Serre}_{\otimes}\langle Y\rangle$.
\end{lem}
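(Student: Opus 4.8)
The plan is to bootstrap from Theorem \ref{thm 4.15}, which already gives $X\in\mathrm{Serre}_\otimes^+\langle Y\rangle=\bigcap_{n>0}\mathrm{Serre}_{\otimes,n}^+\langle Y\rangle$, and to upgrade this to genuine membership in $\mathrm{Serre}_\otimes\langle Y\rangle$ using $\mathbb{N}$-stability. The key extra input is that, in the $\mathbb{N}$-stable setting, each correction term $e_{G}$ with $G\in(\uparrow\mathrm{supp}(Y))_{>n}$ can itself be absorbed into $\mathrm{Serre}_\otimes\langle Y\rangle$ once $n$ is large enough, because the eventual torsion-freeness of $Y$ forces $Y(G_m)\neq 0$ for all $m\geq r$, hence (after reindexing along the total order of $\pi_0(\mathcal U)$) $\uparrow\mathrm{supp}(Y)\supseteq\mathcal U_{>s}$ for some $s$.

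First I would reduce to the case $\uparrow\mathrm{supp}(Y)=\mathcal U$ exactly as in the proof of Theorem \ref{thm 4.15}: let $j:\uparrow\mathrm{supp}(Y)\hookrightarrow\mathcal U$ be the up-closed inclusion, note $j_!j^*\to\mathrm{id}$ is an isomorphism on $X$ and $Y$ (since $\mathrm{supp}(X)\subseteq\mathrm{supp}(Y)\subseteq\uparrow\mathrm{supp}(Y)$), and use $j_!\mathrm{Serre}_\otimes\langle j^*Y\rangle\subseteq\mathrm{Serre}_\otimes\langle j_!j^*Y\rangle=\mathrm{Serre}_\otimes\langle Y\rangle$. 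The subcategory $\uparrow\mathrm{supp}(Y)$ is still $\mathbb{N}$-stable and noetherian (an up-closed subcategory of an $\mathbb{N}$-stable one, with $\pi_0$ a final segment, hence again order-isomorphic to $\mathbb{N}$, and finite generation is inherited), so it suffices to treat $Y$ with $\uparrow\mathrm{supp}(Y)=\mathcal U$, i.e. $\mathrm{supp}(Y)$ cofinal in $\mathbb{N}$.

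Next, fix $n$ large enough that $Y$ is eventually torsion-free past $G_n$ in the sense of Definition \ref{df 5.1}(3), and large enough that $Y(m)\neq 0$ for all $m>n$ (possible since $Y$ is finitely generated and $\mathrm{supp}(Y)$ is cofinal, so $Y(m)\neq0$ for all sufficiently large $m$ by eventual torsion-freeness combined with cofinality: an epimorphism $G_m\twoheadrightarrow G_\ell$ with $\ell\in\mathrm{supp}(Y)$, $\ell>r$, makes $Y(m)\hookleftarrow Y(G_\ell)\neq0$ fail to—wait, the arrow goes the wrong way; instead use that $\mathrm{supp}(Y)$ cofinal plus $\uparrow\mathrm{supp}(Y)=\mathcal U$ gives every $G_m\in\uparrow\mathrm{supp}(Y)$, and for $m$ large pick $\ell\in\mathrm{supp}(Y)$ with $\ell\ge m$ and $G_\ell\twoheadrightarrow G_m$; then past the torsion-free range this injects $Y(m)\hookrightarrow Y(\ell)$—still wrong direction, $Y$ is contravariant so $\alpha^*:Y(G_n)\to Y(G_m)$ for $\alpha:G_m\twoheadrightarrow G_n$, hence $Y(\ell)\hookrightarrow Y(m)$ when $m>r$, forcing $Y(m)\neq0$). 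With such an $n$, for each $G\in\mathcal U_{>n}$ the map $e_{G,Y(G)}\to Y$ adjoint to $\mathrm{id}_{Y(G)}$ gives, after tensoring with $\chi_{G,k}$ and invoking Lemma \ref{lem4.7}, that $\chi_{G,k}\in\mathrm{Serre}_\otimes\langle Y\rangle$; and since $e_G$ is supported on $\uparrow\{G\}=\mathcal U_{\ge|G|}$ (a final segment), Lemma \ref{lem4.8} writes $e_G$ as an extension of $\chi_{G,k}$ by something in $\mathrm{Serre}_\otimes\langle e_H\mid H\in\uparrow\{G\}\setminus\{G\}\rangle$; an induction down the well-ordered tail $\mathcal U_{>n}$ (which is order-isomorphic to $\mathbb{N}$, hence the induction is on the index) then shows $e_G\in\mathrm{Serre}_\otimes\langle Y\rangle$ for every $G\in\mathcal U_{>n}$. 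Consequently $\mathrm{Serre}_{\otimes,n}^+\langle Y\rangle=\mathrm{Serre}_\otimes\langle\{Y\}\cup\{e_G\mid G\in\mathcal U_{>n}\}\rangle=\mathrm{Serre}_\otimes\langle Y\rangle$, and combining with $X\in\mathrm{Serre}_{\otimes,n}^+\langle Y\rangle$ from Theorem \ref{thm 4.15} finishes the proof.

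The main obstacle I anticipate is the inductive step absorbing the generators $e_G$ for $G\in\mathcal U_{>n}$: one must check the induction is well-founded (it is, since the tail is order-isomorphic to $\mathbb{N}$ by $\mathbb{N}$-stability, so induction runs upward on the index with no limit step), and one must be careful that Lemma \ref{lem4.8}'s error term $F\in\mathrm{Serre}_\otimes\langle e_H\mid H\in\uparrow\{G\}\setminus\{G\}\rangle$ only involves $e_H$ with $|H|>|G|>n$, so it lies in the span of strictly-later generators and the induction hypothesis applies. A secondary point needing care is verifying that eventual torsion-freeness together with cofinality of $\mathrm{supp}(Y)$ really does yield $Y(m)\neq0$ for all large $m$ — the direction of the maps $\alpha^*$ (contravariance) is what makes this work, and it should be spelled out explicitly rather than waved at.
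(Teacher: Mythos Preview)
Your inductive step for absorbing the generators $e_G$ with $G\in\mathcal U_{>n}$ into $\mathrm{Serre}_\otimes\langle Y\rangle$ is not well-founded, and this is a genuine gap. Lemma~\ref{lem4.8} expresses $e_{G_m}$ as an extension of $\chi_{G_m}$ by an object $F\in\mathrm{Serre}_\otimes\langle e_{G_\ell}\mid \ell>m\rangle$, so to conclude $e_{G_m}\in\mathrm{Serre}_\otimes\langle Y\rangle$ you must already know $e_{G_\ell}\in\mathrm{Serre}_\otimes\langle Y\rangle$ for all \emph{strictly larger} $\ell$. That is a descending induction on the tail of $\mathbb N$, with no maximal element to serve as a base case; running the induction ``upward on the index'' as you propose furnishes a hypothesis about \emph{smaller} indices, which says nothing about $F$. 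Nor can you bypass the induction using only the $\chi_{\ell,k}$: the Serre ideal they generate is exactly $P_\infty$, the ideal of finitely supported objects, and no $e_m$ lies there.

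The paper circumvents this with a direct construction. In the cofinite-support case it selects a torsion-free element $y\in Y(n_1)$ and, for each $n\ge n_1$, builds a monomorphism $e_n\rightarrowtail e_n\otimes Y$ (send a basis vector $[\alpha]\in e_n(G_m)$ to $[\alpha]\otimes(\alpha^\ast\beta^\ast y)$ for a fixed $\beta:G_n\twoheadrightarrow G_{n_1}$; torsion-freeness of $y$ makes each component injective). This places $e_n$ in $\mathrm{Serre}_\otimes\langle Y\rangle$ in a single step, whence $\mathrm{Serre}_\otimes^+\langle Y\rangle=\mathrm{Serre}_\otimes\langle Y\rangle$ and Theorem~\ref{thm 4.15} finishes. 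A secondary issue: your reduction to $\uparrow\mathrm{supp}(Y)=\mathcal U$ only says $G_0\in\mathrm{supp}(Y)$, not that $\mathrm{supp}(Y)$ is cofinal; the finite-support case must be handled separately, which the paper does via the filtration argument from Lemma~\ref{lem 5.5}.
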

\begin{proof} Note that $\mathrm{supp}(Y)$ is either finite or cofinite. In the case $\mathrm{supp}(Y)$ is  finite, we have $\mathrm{Serre}_{\otimes}\langle Y\rangle=\mathrm{Serre}_{\otimes}\langle \chi_{i,k}|~i\in \mathrm{supp}(Y)\rangle$, as the same way in the proof of Lemma \ref{lem 5.5}. Similarly $\mathrm{Serre}_{\otimes}\langle X\rangle=\mathrm{Serre}_{\otimes}\langle \chi_{i,k}|~i\in \mathrm{supp}(X)\rangle$ and so $X\in \mathrm{Serre}_{\otimes}\langle Y\rangle$.

Now suppose $\mathrm{supp}(Y)$ is cofinite. Then there exist some $n_{1}\in \mathbb{N}$ and some torsion-free element $y\in Y(n_{1})$ such that $\alpha^{*}(y)\neq 0$ for any epimorphism $\alpha$. This would yield a monomorphism $e_{n}\rightarrowtail e_{n}\otimes Y$ for any $n\geq n_{1}$, as shown in the proof of \cite[Lemma 7.10]{BBP+25a}. Thus we have $e_{n}\in \mathrm{Serre}_{\otimes}\langle Y\rangle$ when $n\geq n_{1}$. Therefore $\mathrm{Serre}_{\otimes}^{+}\langle Y\rangle=\mathrm{Serre}_{\otimes}\langle Y\rangle$. Our claim then follows from Theorem \ref{thm 4.15}.
\end{proof}

\begin{thm}Let $\mathcal{U}$ be  noetherian and $\mathbb{N}$-stable. Then the only prime Serre ideals in $\mathrm{A}(\mathcal{U})^{c}$ are $P_{n}$ for $n\in \mathbb{N}$ and $P_{\infty}$. The spectrum of prime Serre ideals of  $\mathrm{A}(\mathcal{U})^{c}$ is homeomorphic to $\mathbb{N}^{*}$, the one-point compactification of $\mathbb{N}$.
\end{thm}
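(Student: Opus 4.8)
The plan is as follows. First I would note that the $P_{n}$ ($n\in\mathbb{N}$) and $P_{\infty}$ are pairwise distinct prime Serre ideals: the $P_{n}$ are prime and distinct by Lemma~\ref{lem4.6}, $P_{\infty}$ is prime by Lemma~\ref{lem 5.5}, and $P_{\infty}\neq P_{n}$ because the object $\gamma_{n}$ of the proof of Lemma~\ref{lem 5.5} lies in $P_{n}$ but not in $P_{\infty}$. The one extra ingredient is the dichotomy that $\mathrm{supp}(X)$ is finite or cofinite in $\mathbb{N}$ for every $X\in\mathrm{A}(\mathcal{U})^{c}$: if $\mathrm{supp}(X)$ is infinite, pick $n_{0}$ exceeding the torsion bound of $X$ from Definition~\ref{df 5.1}(3) with $X(n_{0})\neq 0$; then for every $m\geq n_{0}$ the epimorphism $G_{m}\twoheadrightarrow G_{n_{0}}$ induces an injection $X(n_{0})\hookrightarrow X(m)$, so $X(m)\neq 0$ and $\mathrm{supp}(X)\supseteq\{m:m\geq n_{0}\}$.

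Next I would classify the primes. Fix a prime Serre ideal $P$ and set $I:=\{m\in\mathbb{N}\mid e_{m}\in P\}$. Since $\mathrm{supp}(e_{m})=\{m,m+1,\dots\}$ shrinks as $m$ grows, Lemma~\ref{lem 5.6} shows $I$ is upward closed. If $I=\varnothing$, then every object of $P$ has finite support (otherwise, by the dichotomy and Lemma~\ref{lem 5.6}, $P$ would contain some $e_{m}$), so $P\subseteq P_{\infty}$ and hence $P=P_{\infty}$ by Lemma~\ref{lem 5.5}. Otherwise $I=\{n,n+1,\dots\}$ with $n:=\min I$, and in fact $n\geq 1$: if $e_{0}\in P$ then, since $\mathrm{supp}(\mathrm{1})=\pi_{0}(\mathcal{U})=\mathrm{supp}(e_{0})$, Lemma~\ref{lem 5.6} gives $\mathrm{1}\in\mathrm{Serre}_{\otimes}\langle e_{0}\rangle\subseteq P$, contradicting properness of $P$. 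The key point will be that $\chi_{n-1,k}\notin P$: by Lemma~\ref{lem4.8} there is a short exact sequence $F\rightarrowtail e_{n-1}\twoheadrightarrow\chi_{n-1,k}$ with $F\in\mathrm{Serre}_{\otimes}\langle e_{m}\mid m\geq n\rangle\subseteq P$, so $\chi_{n-1,k}\in P$ would force $e_{n-1}\in P$, i.e.\ $n-1\in I$, contradicting $n=\min I$. From this it follows that $\chi_{i,k}\in P$ for all $i\neq n-1$, since then $\chi_{i,k}\otimes\chi_{n-1,k}$ has empty support, so it is $0\in P$, and primeness together with $\chi_{n-1,k}\notin P$ yields $\chi_{i,k}\in P$.

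It then remains to prove $P=P_{n-1}$. For the inclusion $P\subseteq P_{n-1}$: if $X\in P$ satisfied $X(n-1)\neq 0$, then $\chi_{n-1,k}\otimes X\cong\chi_{n-1,X(n-1)}$ (as in the proof of Proposition~\ref{prop4.9}) would be a nonzero object of $P$, whence Lemma~\ref{lem4.7}(2) gives $\chi_{n-1,k}\in\mathrm{Serre}_{\otimes}\langle\chi_{n-1,X(n-1)}\rangle\subseteq P$, a contradiction. For $P_{n-1}\subseteq P$: given $Y\in P_{n-1}$ we have $\mathrm{supp}(Y)\subseteq\{0,\dots,n-2\}\cup\{n,n+1,\dots\}$, so $\mathrm{supp}(Y)\subseteq\mathrm{supp}(Z)$ for $Z:=e_{n}\oplus\bigoplus_{i\in\mathrm{supp}(Y),\,i\leq n-2}\chi_{i,k}$, which is a finite direct sum of objects of $P$ and hence lies in $P$; now Lemma~\ref{lem 5.6} gives $Y\in\mathrm{Serre}_{\otimes}\langle Z\rangle\subseteq P$. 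This shows $\mathrm{Spc}(\mathrm{A}(\mathcal{U})^{c})=\{P_{n}\mid n\in\mathbb{N}\}\cup\{P_{\infty}\}$. Finally I would identify the topology through the bijection $\phi\colon\mathrm{Spc}(\mathrm{A}(\mathcal{U})^{c})\to\mathbb{N}^{*}$, $P_{n}\mapsto n$, $P_{\infty}\mapsto\infty$: for $X\in\mathrm{A}(\mathcal{U})^{c}$ one has $X\notin P_{n}\iff n\in\mathrm{supp}(X)$ and $X\notin P_{\infty}\iff\mathrm{supp}(X)$ is infinite, so $\phi(Z(\{X\}))$ is $\mathrm{supp}(X)$ if this is finite and $\mathrm{supp}(X)\cup\{\infty\}$, the complement of a finite subset of $\mathbb{N}$, if it is cofinite. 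Hence $\phi$ carries every $Z(\{X\})$ (and so every closed set) to a closed set of $\mathbb{N}^{*}$; conversely the objects $\bigoplus_{i\in F}\chi_{i,k}$ realize all finite $F\subseteq\mathbb{N}$ and the objects $\gamma_{j}$ realize all $\mathbb{N}^{*}\setminus\{j\}$, and arbitrary intersections of these exhaust the closed sets of $\mathbb{N}^{*}$. Thus $\phi$ restricts to a bijection between the closed sets of the two spaces, so it is a homeomorphism.

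The hard part will be the classification step: organizing it around $I=\{m\mid e_{m}\in P\}$, keeping straight the off-by-one between $\min I$ and the index $n-1$ of the resulting prime, and combining the finite/cofinite dichotomy, Lemma~\ref{lem 5.6}, and the short exact sequence of Lemma~\ref{lem4.8} to pin down exactly which $\chi_{i,k}$ lie in $P$. Once this is done, the distinctness of the candidate primes and the reading-off of the Zariski topology are routine.
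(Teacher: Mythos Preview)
Your argument is correct, but it takes a noticeably different route from the paper's.  The paper first shows each $P_{n}$ is \emph{maximal} by identifying $\mathrm{A}(\mathcal{U})^{c}/P_{n}$ and invoking Proposition~\ref{prop3.7}; then, given a prime $P\neq P_{\infty}$, it picks $X\in P\setminus P_{\infty}$, writes $S=\{i:X(i)=0\}$, observes $\bigotimes_{i\in S}\gamma_{i}$ has the same (cofinite) support as $X$, uses Lemma~\ref{lem 5.6} and primeness to get some $\gamma_{i}\in P$, and concludes $P_{i}\subseteq P$, hence $P=P_{i}$ by maximality.  You instead organize everything around $I=\{m:e_{m}\in P\}$: when $I=[n,\infty)$ you use the short exact sequence of Lemma~\ref{lem4.8} (rather than the $\gamma_{i}$'s) to show $\chi_{n-1,k}\notin P$, and then prove both inclusions $P=P_{n-1}$ directly via Lemmas~\ref{lem4.7} and~\ref{lem 5.6}, bypassing Proposition~\ref{prop3.7} entirely.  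Your approach is more elementary in that it never leaves $\mathrm{A}(\mathcal{U})^{c}$ and does not need the localization machinery, at the cost of a slightly longer bookkeeping with the $\chi_{i,k}$ and the off-by-one between $n=\min I$ and the index $n-1$; the paper's argument is shorter once maximality of the $P_{n}$ is in hand.  Your treatment of the topology is also more explicit than the paper's, exhibiting concrete objects ($\bigoplus_{i\in F}\chi_{i,k}$ and $\gamma_{j}$) whose basic closed sets generate all closed subsets of $\mathbb{N}^{*}$.
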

\begin{proof}Firstly we have that $P_{n}$ is maximal for $n\in \mathbb{N}$. Indeed, it is easy to verify that $\mathrm{A}(\mathcal{U})^{c}/~P_{n}\simeq \mathrm{A}(\{n\})\simeq Vec_{k}$. Then by Proposition \ref{prop3.7}, the only prime Serre ideal containing $P_{n}$ is itself and so $P_{n}$ is maximal for $n\in \mathbb{N}$.

Now suppose $P$ is some prime Serre ideal in $\mathrm{A}(\mathcal{U})^{c}$ such that $P\neq P_{\infty}$. If $P\subseteq P_{\infty}$ then we have $P= P_{\infty}$ by Lemma \ref{lem 5.5}, contradicting to our assumption. Thus $P\varsubsetneq P_{\infty}$. Let $X\in P, X\notin P_{\infty}$. Then $\mathrm{supp}(X)$ is cofinite. Set $S:=\{n\in \mathbb{N}|~X(n)=0\}$. Note that $S$ is not empty, otherwise $\mathrm{supp}(X)=\mathrm{supp}(\mathrm{1})=\mathbb{N}$, which implies that $\mathrm{1}\in \mathrm{Serre}_{\otimes}\langle X\rangle\subseteq P$ by Lemma \ref{lem 5.6}, a contradiction. Now we have $\mathrm{supp}(\otimes(\gamma_{i})_{i\in S})=\mathrm{supp}(X)$ where $\gamma_{i}$ is  as in the proof of Lemma \ref{lem 5.5}. Thus $\otimes(\gamma_{i})_{i\in S}\in \mathrm{Serre}_{\otimes}\langle X\rangle\subseteq P$ by Lemma \ref{lem 5.6} again. Therefore we have $\gamma_{i}\in P$ for some $i\in S$. For any $Y\in P_{i}$, we have $\mathrm{supp}(Y)\subseteq \mathrm{supp}(\gamma_{i})$ and so $Y\in \mathrm{Serre}_{\otimes}\langle \gamma_{i}\rangle\subseteq P$. Thus $P_{i}\subseteq P$, which implies that $P=P_{i}$ since $P_{i}$ is maximal. Then we conclude our first claim.

For the second claim, note that  $\mathrm{supp}(X)$ is  either finite or cofinite for each $X\in \mathrm{A}(\mathcal{U})^{c}$. Then $Z(\{X\})$ is either of the form $S\subseteq \mathbb{N}$ where $S$ is finite or of the form $P_{\infty}\bigcup C$ for some cofinite subset $C\subseteq \mathbb{N}$.
\end{proof}

\begin{cor}Let $\mathcal{U}$ be the family of elementary abelian p-groups or the family of cyclic p-groups. Then $\mathrm{Spc}(\mathrm{A}(\mathcal{U})^{c})\cong \mathbb{N}^{*}$.
\end{cor}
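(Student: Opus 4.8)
The plan is to deduce this as an immediate specialization of the preceding Theorem, so the only work is to confirm that each of the two named families satisfies the hypotheses \emph{noetherian} and \emph{$\mathbb{N}$-stable}. First I would verify the three axioms of Definition \ref{df 5.1}. For the family of elementary abelian $p$-groups the isomorphism classes are $[(\mathbb{Z}/p)^{n}]$ for $n\in\mathbb{N}$, and for the family of cyclic $p$-groups they are $[\mathbb{Z}/p^{n}]$ for $n\in\mathbb{N}$; in both cases this is a bijection $\pi_{0}(\mathcal{U})\cong\mathbb{N}$, giving condition (1). For condition (2), a surjection $(\mathbb{Z}/p)^{m}\twoheadrightarrow(\mathbb{Z}/p)^{n}$ exists precisely when $n\le m$ (compare $\mathbb{F}_{p}$-dimensions), and $\mathbb{Z}/p^{m}\twoheadrightarrow\mathbb{Z}/p^{n}$ exists precisely when $n\le m$; hence the order by existence of epimorphisms is total, and with the above enumeration it agrees with the usual order on indices, matching the convention that $m\le n$ forces $G_{n}\twoheadrightarrow G_{m}$.

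Condition (3), eventual torsion-freeness of every finitely generated object, together with the locally noetherian property, is exactly the representation-stability statement recorded in the Example following Definition \ref{df 5.1}, which invokes \cite[Theorem B]{PS22}; I would just cite it. Having checked that $\mathcal{U}$ is noetherian and $\mathbb{N}$-stable in each case, I would then apply the preceding Theorem, which asserts $\mathrm{Spc}(\mathrm{A}(\mathcal{U})^{c})\cong\mathbb{N}^{*}$ for any such $\mathcal{U}$, and the corollary follows.

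There is no genuine obstacle: all the substance lies in the Theorem, and this step is purely a matter of confirming its hypotheses. The only point requiring any care is matching the index convention in Definition \ref{df 5.1}(2) with a correct enumeration of $\pi_{0}(\mathcal{U})$, which, as noted above, works out directly in both examples.
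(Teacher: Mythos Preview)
Your proposal is correct and matches the paper's approach: the corollary is stated without proof in the paper, as it follows immediately from the preceding Theorem once one knows that both families are noetherian and $\mathbb{N}$-stable, which the paper records in the Example following Definition~\ref{df 5.1} (citing \cite[Theorem B]{PS22}). Your explicit verification of conditions (1) and (2) of Definition~\ref{df 5.1} simply spells out what the paper leaves implicit.
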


Note that when  $\mathcal{U}$ is the family of elementary abelian p-groups, there is an equivalence between $\mathrm{A}(\mathcal{U})^{c}$ and the category of finitely generated VI-modules (over $k$).
\begin{cor} The spectrum of prime Serre ideals of finitely generated VI-modules is homeomorphic to $\mathbb{N}^{*}$.
\end{cor}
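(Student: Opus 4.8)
The plan is to deduce this immediately from the previous corollary via the equivalence recalled just above. First I would make that equivalence precise and record that it is functorial: Pontryagin duality $V\mapsto\mathrm{Hom}(V,\mathbb{Q}/\mathbb{Z})$ sends surjections of $\mathbb{F}_{p}$-vector spaces to injections and preserves dimension, so it identifies the family $\mathcal{U}$ of elementary abelian $p$-groups with $\mathrm{VI}^{op}$, where $\mathrm{VI}$ denotes the category of finite-dimensional $\mathbb{F}_{p}$-vector spaces and injective linear maps. Hence $\mathrm{A}(\mathcal{U})=\mathrm{Fun}(\mathcal{U}^{op};\mathrm{Mod}~k)\simeq\mathrm{Fun}(\mathrm{VI};\mathrm{Mod}~k)$ is the category of VI-modules over $k$, and this restricts to an equivalence between $\mathrm{A}(\mathcal{U})^{c}$ and finitely generated VI-modules (cf.\ \cite{PS22}).

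The key observation is then that this equivalence is tensor exact. Being precomposition with an equivalence of indexing categories, it carries the pointwise tensor product to the pointwise tensor product and the constant unit to the constant unit, and it is exact as an equivalence of abelian categories. Consequently it transports Serre ideals to Serre ideals, radical ideals to radical ideals, and prime Serre ideals to prime Serre ideals, compatibly with the Zariski topology; since $\mathrm{Spc}(\mathcal{A})$ with its topology depends only on the tensor abelian category $\mathcal{A}$ up to tensor exact equivalence---which one checks directly, or deduces from \cite[Lemma 8.2]{BKS} applied to the equivalence and to its quasi-inverse---this induces a homeomorphism between $\mathrm{Spc}$ of the category of finitely generated VI-modules and $\mathrm{Spc}(\mathrm{A}(\mathcal{U})^{c})$. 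The previous corollary identifies the latter with $\mathbb{N}^{*}$, and the result follows.

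I do not anticipate any real difficulty here: this is essentially a formal transport of structure along an equivalence. The only point that merits a line of care is the symmetric monoidality of the equivalence---needed so that Serre \emph{ideals}, and not merely Serre subcategories, correspond under it---and that is immediate once the equivalence is described as reindexing along an equivalence of source categories.
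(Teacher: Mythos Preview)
Your proposal is correct and matches the paper's approach: the paper states the corollary as an immediate consequence of the equivalence $\mathrm{A}(\mathcal{U})^{c}\simeq$ (finitely generated VI-modules) for $\mathcal{U}$ the family of elementary abelian $p$-groups, combined with the previous corollary computing $\mathrm{Spc}(\mathrm{A}(\mathcal{U})^{c})\cong\mathbb{N}^{*}$. Your additional care in spelling out that the equivalence is tensor exact (being reindexing along an equivalence of source categories) is exactly the point the paper leaves implicit.
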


\begin{rem}Our method could be applied to $\mathrm{FI}_{k}^{c}$, the category  of finitely generated FI-modules, since each notion appeared in this article has a counterpart in $\mathrm{FI}_{k}^{c}$; see \cite{CEFN14}. Thus we have $\mathrm{Spc}(\mathrm{FI}_{k}^{c})\cong \mathbb{N}^{*}$.
\end{rem}

\begin{rem}Let $\mathrm{D}(\mathcal{U})^{c}$ denote the category of compact objects in the derived category of $\mathrm{A}(\mathcal{U})$. It is shown in \cite[Theorem 6.13, Example 12.10]{BBP+25b} that the Balmer spectrum of $\mathrm{D}(\mathcal{U})^{c}$ is homeomorphic to the Hochster dual of $\mathrm{Spec}(\mathbb{Z})$ when $\mathcal{U}$ is the family of elementary abelian p-groups, while $\mathrm{Spc}(\mathrm{D}(\mathcal{U})^{c})\cong \mathbb{N}^{*}$ when $\mathcal{U}$ is the family of cyclic p-groups. Thus it is natural to ask the following question.
\end{rem}

\begin{Quest}For a noetherian family $\mathcal{U}$, when we have $\mathrm{Spc}(\mathrm{A}(\mathcal{U})^{c})\cong \mathrm{Spc}(\mathrm{D}(\mathcal{U})^{c})$?
\end{Quest}

\renewcommand\refname{\bf References}

\vspace{4mm}

\textbf{Peng Xu}\\
School of Mathematics, Nanjing University,
Nanjing 210093, P. R. China;\\
E-mail: \textsf{602023210017@smail.nju.edu.cn}\\[1mm]

\end{document}